\documentclass[12pt]{article}
\usepackage{xparse}
\usepackage{amsmath}
\usepackage{amsfonts,amsthm,amssymb,bbding}
\usepackage{times}
\usepackage{enumerate}
\allowdisplaybreaks[4]
\usepackage{hyperref}
\hypersetup{
	colorlinks=true,
	anchorcolor=yellow,
	linkcolor=cyan,
	filecolor=blue,
	urlcolor=red,
	citecolor=green,
}
\usepackage[numbers,sort&compress,comma,square]{natbib}
\usepackage{latexsym,bm}
\usepackage{mathtools}

\pagestyle{myheadings} \markright{} \textwidth 150mm \textheight 235mm \oddsidemargin=1cm
\evensidemargin=\oddsidemargin\topmargin=-1.5cm

\theoremstyle{definition}


\usepackage[margin=1in]{geometry}

\theoremstyle{definition}
\newtheorem{definition}{Definition}[section]
\theoremstyle{definition}

\theoremstyle{plain}

\newtheorem{theorem}{Theorem}
\newtheorem{lemma}{Lemma}
\newtheorem{corollary}{Corollary}
\newtheorem{proposition}{Proposition}

\theoremstyle{remark}
\newtheorem{remark}{Remark}
\newtheorem*{example}{Example}

\newtheoremstyle{case}{}{}{}{}{}{:}{ }{}
\theoremstyle{case}

\begin{document}
\title{\bf Zero product determined Banach algebras\footnote{Supported by the National Natural Science Foundation of China (Grant Nos. 11871021).}}
\author{ Jiankui Li \footnote{Corresponding author. Email: jkli@ecust.edu.cn }, Shaoze Pan, Shanshan Su \\[2mm]
\small School of Mathematics, East China University of Science and Technology, \\
\small  Shanghai 200237, P.R. China\\}

\date{}
\maketitle
{\flushleft\large\bf Abstract}
 Let $\mathcal{L}$ be a completely distributive commutative subspace lattice or a subspace lattice with two atoms, we use a unified approach to study the derivations, homomorphisms on $\mathrm{Alg} \mathcal{L}$. We verify that the multiplier algebra of $\mathrm{Alg} \mathcal{L}\cap \mathcal{K}(\mathcal{H})$  is isomorphic to $\mathrm{Alg} \mathcal{L}$ and $\mathrm{Alg} \mathcal{L}$ is zero product determined.  For $T$ in $M_{n}(\mathbb{C})$, $n\geq 2$, we show that $\mathcal{A}_{T}$ is zero product determined if and only if every local derivation from $\mathcal{A}_{T}$ into any Banach $\mathcal{A}_{T}$-bimodule is a derivation. In addition, we establish some equivalent conditions for an algebra to be zero product determined. For countable dimensional locally matrix algebras and triangular UHF algebras, we also show that they are zero Lie product determined.
\begin{flushleft}
\textbf{Keywords:} multiplier algebra, non-self-adjoint algebra, zero product determined algebra
\end{flushleft}
\textbf{AMS Classification:} 47L35; 46H70; 47B48

\section{Introduction}
Zero product determined algebras, introduced by Bre\v sar et al. in the two papers \cite{breasr2007,bilinearbresar}, has been intensively studied over the last decades  in which  it is used  as a powerful tool in studying some algebraic and analytic problems, see the  papers \cite{orthalam,commutators} and references therein. Especially, the zero product determined condition could be used to solve problems about derivations and homomorphisms, and it is also used to deal with the preserving problems, for relative details we recommend the book \cite{bresarbook}. There are some algebras which are zero product determined such as $C^{*}$-algebras and group algebras of locally compact groups. Recall that a zero product determined algebra would transform into a zero Lie product determined algebra when the product operation is replaced by Lie product. In this paper, we shall verify that some algebras are zero (Lie) product determined.

Suppose $\mathcal{H}$ is a complex separable Hilbert space, a \textit{subspace lattice} on $\mathcal{H}$ is a collection $\mathcal{L}$ of closed subspaces of $\mathcal{H}$ with $(0)$ and $\mathcal{H}$ in $\mathcal{L}$ and such that, for every family $\{S_{r}\}$ of elements of $\mathcal{L}$, both $\cap S_{r}$ and $\vee S_{r}$ belong to $\mathcal{L}$, where $\vee$ denotes the closed linear span of $\{S_{r}\}$. If $\mathcal{L}$ is a subspace lattice, Alg$\mathcal{L}$ denotes the set of operators on $\mathcal{H}$ which leave every subspace of $\mathcal{L}$ invariant. A subspace lattice $\mathcal{L}$ on $\mathcal{H}$ is said to have the \textit{strong rank one density property} if the algebra generated by the rank one operators of Alg$\mathcal{L}$ is dense in  Alg$\mathcal{L}$ in the strong operator topology. Notice that every completely distributive commutative subspace lattice and subspace lattice  with  two atoms has this property, for more details we refer to \cite{ABSL}. Nevertheless, as point out in \cite{ABSL}, the question about which subspace lattices $\mathcal{L}$ have the strong rank one density property is still unknown in general although there are some results to answer the question partially.

In this paper, multiplier algebras will also be concerned, which are used as a method to study whether a Banach algebra is zero product determined. The theory of multipliers is developed for topological algebras by Johnson and further investigated in the case of Banach algebras and $C^{*}$-algebras by Busby, Fontenot and others; see the monographs \cite{muitiplier} for additional references.

The paper is organized as follows:

In Section \ref{sec3}, we present some equivalent descriptions of zero product determined Banach algebras (Theorems \ref{equivalent} and \ref{multiplierB}). We investigate an algebra is zero product determined through characterizing its multipliers and local derivations, and  show  the graph algebras associated with derivations and homomorphisms are zero product determined.

In Section \ref{sec2}, we deal with the class of subspace lattice algebras $ \mathrm{Alg}\mathcal{L}$ where every subspace lattice $\mathcal{L}$ has  the strong rank one density property. Firstly, we prove that the multiplier algebra of Alg$\mathcal{L} \cap \mathcal{K(H)}$ and Alg$\mathcal{L}$ are isomorphic (Theorem \ref{strongone}). Secondly, we show that completely distributive commutative subspace lattice algebras and subspace lattice algebras with two atoms are necessarily zero product determined (Theorem \ref{cdcslB}).  As an application, we obtain every bounded local $n$-cocycle from $(\textrm{Alg}\mathcal{L}) ^{(n)}$ into a Banach Alg$\mathcal{L}$-bimodule $\mathcal{M}$ is an $n$-cocycle and every commutator in Alg$\mathcal{L}$ lies in the closed linear span of square-zero elements.

In Section \ref{zLpd}, we utilize a unified technique to prove some algebras are zero Lie product determined. Specifically, triangular UHF algebras, countable dimensional locally matrix algebras, and the algebra $\textrm{Alg} \mathcal{L}\cap \mathcal{F}(\mathcal{H})$ of a $\mathcal{J}$-subspace lattice algebra would be concerned in this section.


\section{Multiplier algebras and zero product determined Banach algebras}

Throughout this paper, $\mathcal{H}$ is a complex separable Hilbert space and $B(\mathcal{H})$ is the algebra of all bounded linear operators acting on $\mathcal{H}$. In this section, we shall prove some non-self-adjoint algebras are zero product determined.
\subsection{Zero product determined Banach algebras }\label{sec3}
Let us introduce some notations and concepts that we shall use in the following.  A Banach algebra $\mathcal{A}$ is said to be \textit{zero product determined}, if for every continuous bilinear functional $\varphi: \mathcal{A}\times \mathcal{A}\rightarrow \mathbb{C}$ satisfying $\varphi(x, y) = 0$ whenever $xy = 0$, there exists a continuous linear functional $\tau$ on $\mathcal{A}$ such that
\begin{equation}\label{eq2}
  \varphi(x, y)=\tau(xy)
\end{equation}
for any $x,y \in \mathcal{A}.$

If $\mathcal{A}$ is a Banach algebra with a bounded approximate identity, (\ref{eq2}) is  equivalent to
\begin{equation}\label{eq3}
  \varphi(xy,z)=\varphi(x,yz)
\end{equation}
for any $x,y,z \in \mathcal{A}.$

  A linear mapping $\delta$ from a Banach algebra $\mathcal{A}$ to a Banach $\mathcal{A}$-bimodule $\mathcal{M}$ is said to be a \textit{derivation} if
$\delta(xy) = \delta(x) \cdot y + x \cdot \delta(y)$ for any $x,y\in \mathcal{A}.$
 A linear mapping $\delta$ from a Banach algebra $\mathcal{A}$ to a Banach $\mathcal{A}$-bimodule $\mathcal{M}$ is called a \textit{local derivation} if, for every $x\in \mathcal{A}$, there exists a derivation $\delta_{x} : \mathcal{A}\rightarrow \mathcal{M}$
such that $\delta(x) = \delta_{x}(x).$

Let $\mathcal{M}$ be a Banach $\mathcal{A}$-bimodule over a Banach algebra $\mathcal{A}$. A linear mapping $V: \mathcal{A}\rightarrow \mathcal{M}$ is said to be a \textit{left (right) multiplier} if $V(ab)=V(a)\cdot b$ ($V(ab)=a\cdot V(b)$), for every $a,b\in \mathcal{A}.$ A linear mapping $V: \mathcal{A}\rightarrow \mathcal{M}$ is called a \textit{local left (right) multiplier} if, for every $a\in\mathcal{A}$, there exists a left (right) multiplier $V_{a}$ from $\mathcal{A}$ to $\mathcal{M}$ such that $V(a)=V_{a}(a).$

%
%
%
%
%
%

For  $T\in B(\mathcal{H})$, we denote  $\mathcal{A}_{T}$ the norm-closed subalgebra of $B(\mathcal{H})$ generated by $T$ and $I$. If $\textrm{dim}\mathcal{H}<\infty$,  we obtain that $\mathcal{A}_{T}$ is zero product determined if every bounded local derivation from $\mathcal{A}_{T}$ into a  Banach $\mathcal{A}_{T}$-bimodule $\mathcal{M}$ is a derivation. In order to prove this, some significant examples are needed. The first example shows that there exists a local derivation from $\mathcal{J}_{n}, n\geq2,$ into some of its bimodule that is not a derivation.

\begin{example} \label{Fanli1}
For a positive number $n \geq 2$,  let $\{ E_{ij} \}_{i,j = 1}^{n}$ be the standard matrix unit system of $M_{n}(\mathbb{C})$, and let
$\mathcal{J}_{n}$ be the algebra generated by the identity $I$ and a Jordan block $D_{n}$, where
$$D_{n}=\begin{pmatrix}
0 & 1 & 0&\cdots&0\\
0 & 0 & 1 & \cdots & 0\\
\vdots & \vdots &\vdots&\ddots & \vdots\\
0 & 0 & 0 & \cdots & 1\\
0 & 0 & 0&\cdots & 0
\end{pmatrix}_{n \geq 2}. $$

For any element $A\in \mathcal{J}_{n}$, it can be written as $A=\lambda_{1}I+\lambda_{2}D_{n}+\cdots+\lambda_{n}D_{n}^{n-1},$ where $ \lambda_{i} \in \mathbb{C}.$

%

For $n=2$, $$\mathcal{J}_{2}=\left\{\begin{pmatrix}
               \lambda_{1} & \lambda_{2} \\
                0 & \lambda_{1} \\
             \end{pmatrix}: \lambda_{1}, \lambda_{2}\in \mathbb{C}\right\}=\{\lambda_{1}I+\lambda_{2}E_{12}: \lambda_{1}, \lambda_{2}\in \mathbb{C}\}.$$
Let $\mathcal{M}$ be a \textit{non-unital} Banach $\mathcal{J}_{2}$-bimodule with its left module action defined by $A\cdot M=0$, for every $A\in \mathcal{J}_{2}, M \in \mathcal{M},$ and it satisfies $\delta(I)\cdot E_{12} \neq 0$. Define
$$\delta: \mathcal{J}_{2}\rightarrow \mathcal{M}, \ \lambda_{1}I+\lambda_{2}E_{12}\mapsto \lambda_{1}\delta(I)\cdot I+2\lambda_{2}\delta(I)\cdot E_{12}.$$
Then $\delta$ is a local derivation which is not a derivation.

Indeed, if $\lambda_{1}\neq 0,$ then $\delta(\lambda_{1}I+\lambda_{2}E_{12})=(\delta(I)\cdot I+\frac{\lambda_{2}}{\lambda_{1}}\delta(I)\cdot E_{12})(\lambda_{1}I+\lambda_{2}E_{12})$ or otherwise $\delta(\lambda_{2}E_{12})=2\delta(I)\cdot \lambda_{2}E_{12}.$ This shows that $\delta$ is a local derivation. As $\delta((I + E_{12})^{2})-\delta(I + E_{12}) \cdot (I + E_{12}) = \delta(I) \cdot E_{12} \neq 0$, $\delta$ is not a derivation.


For $n\geq3$, define a linear mapping $\delta : \mathcal{J}_{n} \rightarrow \mathcal{J}_{n} , \ A \mapsto (\lambda_{3} - \lambda_{2})E_{1n}$. We  claim that $\delta$ is a local derivation which is not a derivation.

Define
$$\delta_{1} : \mathcal{J}_{n} \rightarrow \mathcal{J}_{n}, \ A \mapsto [E_{1,n-2}, A] $$
and
$$\delta_{2} : \mathcal{J}_{n} \rightarrow \mathcal{J}_{n}, \ A \mapsto [E_{2n}, A] .$$ It is evident that those mappings
are derivations. Moreover, for any $A \in \mathcal{J}_{n}$, we have
\begin{equation*}
\delta(A) = \begin{cases}
\delta_{1}(A), & \text{if} \quad \lambda_{2} = 0;\\
\lambda_{2}^{-1} (\lambda_{2} - \lambda_{3}) \delta_{2}(A), & \text{if} \quad \lambda_{2} \neq 0.
\end{cases}
\end{equation*}
Hence, $\delta$ is a local derivation.

However, $\delta$ is not a derivation, since with a straightforward calculation, the $(1, n)$-entry of
$$\delta(D_{n}^{2}) - \delta(D_{n})D_{n} - D_{n} \delta(D_{n})$$
is equal to $1$.

\end{example}
\begin{remark}
It is easy to verify that, in Example \ref{Fanli1}, every local derivation from $\mathcal{J}_{2}$ into any \textit{unital} Banach $\mathcal{J}_{2}$-bimodule is a derivation.
\end{remark}

In the following example, we construct a mapping  on $\mathcal{J}_{n}$ such that it is a local left (right) multiplier but not a left (right) multiplier.

\begin{example}\label{multiplier}
$\mathcal{J}_{n}$ is defined in the same way as Example \ref{Fanli1}. For any element  $A\in \mathcal{J}_{n}$, $A=\lambda_{1}I+\lambda_{2}D_{n}+\cdots+\lambda_{n}D_{n}^{n-1},$ where $ \lambda_{i} \in \mathbb{C}.$

Define a linear mapping $\delta: \mathcal{J}_{n}\rightarrow \mathcal{J}_{n}$ by $A\mapsto \lambda_{1}D_{n}^{n-2}+2\lambda_{2}D_{n}^{n-1}.$

If $\lambda_{1}\neq 0$, $$\delta(A)=(D_{n}^{n-2}+\frac{\lambda_{2}}{\lambda_{1}}
D_{n}^{n-1})(\lambda_{1}I+\lambda_{2}D_{n}+\cdots+\lambda_{n}D_{n}^{n-1}).$$

If $\lambda_{1}=0$, $\delta(A)=2\lambda_{2}D_{n}^{n-1}.$

This shows that $\delta$ is a local multiplier. If $\lambda_{1}\neq 0$,  $D_{n}^{n-2}+\frac{\lambda_{2}}{\lambda_{1}}
D_{n}^{n-1}$ is uniquely determined by $A$, and so $\delta$ is not  a multiplier.
\end{example}

\begin{theorem}\label{equivalent}
For $T \in M_{n}(\mathbb{C}), n\geq 2$,  the following statements are equivalent:

\begin{itemize}
  \item[(i)] $\mathcal{A}_{T}$ is a zero product determined Banach algebra.
   \item[(ii)] Every bounded local derivation from $\mathcal{A}_{T}$ into any  Banach $\mathcal{A}_{T}$-bimodule $\mathcal{M}$ is a derivation.
   \item[(iii)]    Every bounded local left (right) multiplier  from $\mathcal{A}_{T}$ into any   Banach $\mathcal{A}_{T}$-bimodule $\mathcal{M}$ is a left (right) multiplier.
    \item[(iv)]  $T$ is diagonalizable.
\end{itemize}

\end{theorem}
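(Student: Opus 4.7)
The plan is to derive (iv) $\Leftrightarrow$ (i) from the structure theory of the finite-dimensional commutative algebra $\mathcal{A}_{T}=\mathbb{C}[T]$, to obtain (i) $\Leftrightarrow$ (ii) and (i) $\Leftrightarrow$ (iii) by invoking Theorem \ref{multiplierB} (which records the general equivalence between the zero product determined property and the requirement that all bounded local derivations, respectively local left/right multipliers, into arbitrary Banach bimodules are derivations, respectively multipliers), and finally to use Examples \ref{Fanli1} and \ref{multiplier} to secure the contrapositives (ii) $\Rightarrow$ (iv) and (iii) $\Rightarrow$ (iv).

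First I would factor the minimal polynomial of $T$ as $m_{T}(x)=\prod_{j}(x-\mu_{j})^{k_{j}}$ with distinct eigenvalues $\mu_{j}$, so that the Chinese Remainder Theorem delivers $\mathcal{A}_{T}\cong \prod_{j}\mathbb{C}[x]/(x-\mu_{j})^{k_{j}}$. Then $T$ is diagonalizable iff every $k_{j}=1$, in which case $\mathcal{A}_{T}\cong \mathbb{C}^{r}$ is a finite-dimensional commutative $C^{*}$-algebra, and the classical fact that every $C^{*}$-algebra is zero product determined (recalled in the introduction) yields (iv) $\Rightarrow$ (i). The implications (i) $\Rightarrow$ (ii) and (i) $\Rightarrow$ (iii) are then immediate from Theorem \ref{multiplierB} applied to the unital Banach algebra $\mathcal{A}_{T}$.

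For the contrapositive (ii) $\Rightarrow$ (iv), suppose $T$ is not diagonalizable, so some $k_{j}\geq 2$; after the translation $y=x-\mu_{j}$, the decomposition above exhibits $\mathcal{A}_{T}$ as an algebra direct sum $\mathcal{A}_{T}=\mathcal{B}\oplus \mathcal{J}_{m}$ with $m=k_{j}\geq 2$. Pick the bounded local derivation $\delta_{0}:\mathcal{J}_{m}\to \mathcal{M}_{0}$ that fails to be a derivation from Example \ref{Fanli1}, turn $\mathcal{M}_{0}$ into a Banach $\mathcal{A}_{T}$-bimodule by declaring that $\mathcal{B}$ acts as zero, and define $\widetilde{\delta}:\mathcal{A}_{T}\to \mathcal{M}_{0}$ by $\widetilde{\delta}(b\oplus j)=\delta_{0}(j)$. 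A routine check (the local-derivation witness at $b\oplus j$ is just the extension-by-zero of the witness for $\delta_{0}$ at $j$, while the failure of the derivation identity lifts verbatim from $\mathcal{J}_{m}$) shows that $\widetilde{\delta}$ is a bounded local derivation which is not a derivation, contradicting (ii). The implication (iii) $\Rightarrow$ (iv) is handled by the same template, with Example \ref{multiplier} in place of Example \ref{Fanli1}.

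The main obstacle will be the step (i) $\Rightarrow$ (ii), (iii): this is the only nontrivial non-structural input and relies entirely on having Theorem \ref{multiplierB} already in hand. Once that equivalence is available, the remainder is a clean combination of Jordan theory for $\mathbb{C}[T]$ with the easy observation that both the property of being zero product determined and its failure, and likewise the local-derivation and local-multiplier properties and their failures, propagate cleanly through direct-summand decompositions of the form $\mathcal{A}_{T}=\mathcal{B}\oplus \mathcal{J}_{m}$.
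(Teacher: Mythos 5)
Your overall architecture (structure theory of $\mathbb{C}[T]$ for (iv)$\Leftrightarrow$(i), the two examples for (ii)$\Rightarrow$(iv) and (iii)$\Rightarrow$(iv), lifted through the direct-sum decomposition $\mathcal{A}_T=\mathcal{B}\oplus\mathcal{J}_m$ with $\mathcal{B}$ acting trivially) matches the paper's and is sound. But there is a genuine gap at exactly the step you yourself flag as the crux: (i)$\Rightarrow$(ii). You describe Theorem \ref{multiplierB} as ``the general equivalence between the zero product determined property and the requirement that all bounded local derivations \dots are derivations,'' and this is a mischaracterization. Theorem \ref{multiplierB} is about continuous linear maps $f$ satisfying the one-sided annihilation condition $ab=0\Rightarrow f(a)\cdot b=0$ being left multipliers (into $\mathcal{A}^*$, or into a \emph{separating} bimodule); it says nothing about local derivations. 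Local derivations do not satisfy that hypothesis: if $ab=0$ and $\delta(a)=\delta_a(a)$, $\delta(b)=\delta_b(b)$ for possibly different witnessing derivations, one only gets $\delta(a)\cdot b=-a\cdot\delta_a(b)$, which need not vanish and in any case is not of the required form. The correct input for (i)$\Rightarrow$(ii) is a different result entirely --- the paper cites Samei's theorem \cite[Theorem 2.5(ii)]{ncocycles}, which exploits the fact that a bounded local derivation satisfies the triple condition $ab=bc=0\Rightarrow a\cdot\delta(b)\cdot c=0$ and that this, for zero product determined algebras with bounded approximate identity, forces $\delta$ to be a derivation. Without importing that (nontrivial) result, your implication (i)$\Rightarrow$(ii) is unproved.

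A secondary version of the same issue affects your direct route to (i)$\Rightarrow$(iii). A local left multiplier $V$ \emph{does} satisfy $ab=0\Rightarrow V(a)\cdot b=0$ (since $V(a)\cdot b=V_a(a)\cdot b=V_a(ab)=0$), so Theorem \ref{multiplierB}(iii) applies --- but only when $\mathcal{M}$ is a separating bimodule, whereas statement (iii) of the theorem quantifies over arbitrary Banach bimodules. The paper sidesteps this by deducing (iii) from (ii): kill one of the module actions, so that derivations and local derivations into the resulting bimodule coincide with (local) left, respectively right, multipliers; you should do the same, or explain how to reduce an arbitrary bimodule to a separating one. The remaining pieces of your proposal are fine and in places cleaner than the paper's: your CRT description $\mathcal{A}_T\cong\prod_j\mathbb{C}[x]/(x-\mu_j)^{k_j}$ gives (iv)$\Rightarrow$(i) directly (in finite dimensions the algebra isomorphism onto $\mathbb{C}^r$ is automatically topological, so you do not need the paper's appeal to \cite{marcouxabelian}), and your direct use of Example \ref{Fanli1} for (ii)$\Rightarrow$(iv) is a legitimate alternative to the paper's detour (ii)$\Rightarrow$(iii)$\Rightarrow$(iv), provided you verify, as you sketch, that the zero extension of a derivation of $\mathcal{J}_m$ is a derivation of $\mathcal{A}_T$ for the inflated bimodule.
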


\begin{proof}

$(i)\Rightarrow (iv)$: Denote the spectrum of $T$ by $\textrm{Sp}(T)=\{\lambda_{1}, \lambda_{2}, \cdots, \lambda_{s}\},$ where $\lambda_{i}\in \mathbb{C}, i=1,2,\cdots,s.$ By Spectral decomposition theorem,  $T$ can be written in the following form:
$$T=T_{1}\oplus T_{2}\oplus\cdots\oplus T_{s},$$
 where $T_{i}$ is the operator corresponding to the eigenvalue $\lambda_{i}.$  By \cite[Theorem 2.1]{wupeiyuan},

 $$\mathcal{A}_{T}=\mathcal{A}_{T_{1}}\oplus\mathcal{A}_{T_{2}}\oplus\cdots\oplus\mathcal{A}_{T_{s}}.$$

It follows from  \cite[Theorem 1.16]{bresarbook} that $\mathcal{A}_{T}$ is zero product determined if and only if $\mathcal{A}_{T_{i}}$ is zero product determined for every $i=1,2, \ldots, s.$ For each $\mathcal{A}_{T_{i}}, \ i\in\{1,\ldots, s\}$, $T_{i}$ is similar to a matrix which can be written as following:
$$\left(
  \begin{array}{ccccc}
    J_{k_{1}} & 0 & 0 & \cdots & 0 \\
    0 & J_{k_{2}}& 0 &\cdots & 0\\
     \vdots &\vdots& \vdots &\ddots & \vdots\\
    0 & 0 & 0 & \cdots & 0\\
    0 & 0 & 0 & \cdots & J_{k_{m}}\\
  \end{array}
\right)$$
where $J_{k_{j}}$ is the Jordan block that has the form:
$$J_{k_{j}}(\lambda_{i})=\left(
          \begin{array}{ccccc}
            \lambda_{i} & 1 & 0 &\cdots& 0\\
            0 & \lambda_{i} & 1 &\cdots& 0\\
            \vdots & \vdots & \ddots& \ddots & \vdots\\
            0 &  0&  \cdots& \lambda_{i}& 1\\
            0 & 0 & \ldots &0& \lambda_{i} \\
          \end{array}
        \right)_{k_{j}\times k_{j}}.
$$
For each $J_{k_{j}}$,
$$\lambda_{i}I-J_{k_{j}}(\lambda_{i})=\left(
          \begin{array}{ccccc}
            0 & 1 & 0 &\cdots& 0\\
            0 & 0 & 1 &\cdots& 0\\
            \vdots & \vdots & \ddots& \ddots & \vdots\\
            0 &  0&  \cdots& 0& 1\\
            0 & 0 & \ldots &0& 0 \\
          \end{array}
        \right)_{k_{j}\times k_{j}},
$$
which is denoted by ${\widetilde{J}_{k_{j}}}.$

Let $k=\max\{k_{1},k_{2},\ldots, k_{m}\}$. If $k\geq2$, denote
$\mathcal{B}_{{{\widetilde{J}_{{k}}}}}$  the non-unital algebra generated by ${\widetilde{J}_{{k}}}$. Apparently, $\mathcal{B}_{{\widetilde{J}_{{k}}}}^{2}\neq \mathcal{B}_{{\widetilde{J}_{{k}}}}.$ By \cite[Proposition 1.19]{bresarbook}, $\mathcal{A}_{{\widetilde{J}_{{k}}}}$,  the algebra obtained by adjoining a unity to $\mathcal{B}_{{\widetilde{J}_{{k}}}},$ is not zero product determined. This contradicts to our assumption. Hence for each $J_{k_{j}}$, it is a diagonal matrix, that is,  $k_{j}$ must be 1. It follows that $T$ is  diagonalizable.


$(i)\Rightarrow(ii)$: If $\mathcal{A}_{T}$ is zero product determined,  then by \cite[Theorem 2.5(ii)]{ncocycles}, every bounded local derivation from $\mathcal{A}_{T}$ into any Banach $\mathcal{A}_{T}$-bimodule $\mathcal{M}$ is a derivation

$(ii)\Rightarrow (iii)$: In this case, every bounded local left (resp. right) multiplier can be considered as a local derivation according to define the module action by $\mathcal{J}_{n}\cdot \mathcal{M} =0$ (resp. $\mathcal{M}\cdot \mathcal{J}_{n}=0$).

$(iii)\Rightarrow (iv)$: Thanks to Example \ref{multiplier}, for $n \geq 2$, we could always construct a bounded local multiplier on $\mathcal{J}_{n}$ that is not a multiplier. Thus if $(iii)$ is satisfied, the dimension of each $\mathcal{A}_{T_{i}}$ must be 1, and therefore $T$ is diagonalizable.

$(iv)\Rightarrow(i)$: If $(iv)$ is satisfied, then by \cite[Corollary 2.11]{marcouxabelian}, $\mathcal{A}_{T}$ is similar to a $C^*$-algebra which is zero product determined, so is $\mathcal{A}_{T}$.
\end{proof}

Let $\mathcal{A}$ be a Banach algebra, $\mathcal{M}$  a Banach $\mathcal{A}$-bimodule. We say that $\mathcal{M}$
 is a \textit{separating} Banach $\mathcal{A}$-bimodule, if, for any $m\in \mathcal{M}$, $$\mathcal{A}\cdot m=\{0\}\Rightarrow m=0\ \textrm{and} \ m\cdot \mathcal{A}=\{0\}\Rightarrow m=0.$$

In the following theorem, the proof of $(ii)$ implies $(iii)$ could be seen in \cite{chen}, however, we provide the details here for completeness and convenience.

\begin{theorem}\label{multiplierB}
 Suppose $\mathcal{A}$ is a  Banach algebra having a bounded approximate identity. Then the following properties are equivalent:
 \begin{itemize}
   \item[(i)] If any continuous linear mapping $f$ from $\mathcal{A}$ into $\mathcal{A}^{*}$ satisfies the following property:  $$ab=0\Rightarrow f(a)\cdot b=0,$$ then $f$ is a left multiplier.
   \item[(ii)]  $\mathcal{A}$ is zero product determined.
   \item[(iii)] Suppose $\mathcal{M}$ is a separating Banach $\mathcal{A}$-bimodule, if any continuous linear mapping $f$ from $\mathcal{A}$ into $\mathcal{M}$ satisfies the following property:  $$ab=0\Rightarrow f(a)\cdot b=0,$$ then $f$ is a left multiplier.
 \end{itemize}
\end{theorem}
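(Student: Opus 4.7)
My plan is to prove the cyclic chain $(i)\Rightarrow(ii)\Rightarrow(iii)\Rightarrow(i)$. Throughout, the bounded approximate identity plays two essential roles: together with Cohen's factorization theorem it yields $\mathcal{A}^{2}=\mathcal{A}$, which both makes (\ref{eq2}) equivalent to the trilinear associativity identity (\ref{eq3})---the form of the zero-product-determined condition best suited here---and guarantees that the dual module $\mathcal{A}^{*}$ is separating in the sense introduced just above the statement.

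For $(i)\Rightarrow(ii)$, I would start from a continuous bilinear $\varphi:\mathcal{A}\times\mathcal{A}\to\mathbb{C}$ vanishing on orthogonal pairs and define a continuous linear map $f:\mathcal{A}\to\mathcal{A}^{*}$ by $f(a)(b):=\varphi(a,b)$. Using the standard dual bimodule action $(g\cdot b)(c)=g(bc)$, the hypothesis $ab=0$ gives $a(bc)=(ab)c=0$, hence $\varphi(a,bc)=0$ for every $c$, which is exactly the assertion $f(a)\cdot b=0$. Hypothesis (i) then yields $f(ab)=f(a)\cdot b$, and unpacking this on an arbitrary $c\in\mathcal{A}$ produces $\varphi(ab,c)=\varphi(a,bc)$, i.e.\ condition (\ref{eq3}), so $\mathcal{A}$ is zero product determined.

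For $(ii)\Rightarrow(iii)$, let $\mathcal{M}$ be separating and $f:\mathcal{A}\to\mathcal{M}$ be continuous with $f(a)\cdot b=0$ whenever $ab=0$. For each $g\in\mathcal{M}^{*}$ consider the continuous bilinear functional $\varphi_{g}(a,b):=g(f(a)\cdot b)$; it vanishes on orthogonal pairs, so applying (\ref{eq3}) gives $g(f(ab)\cdot c)=g(f(a)\cdot b\cdot c)$ for all $a,b,c\in\mathcal{A}$ and all $g\in\mathcal{M}^{*}$. The Hahn--Banach theorem promotes this to $(f(ab)-f(a)\cdot b)\cdot c=0$ for every $c\in\mathcal{A}$, and the right-separating property of $\mathcal{M}$ forces $f(ab)=f(a)\cdot b$.

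Finally, $(iii)\Rightarrow(i)$ reduces to checking that $\mathcal{A}^{*}$ is itself a separating bimodule: if $m\in\mathcal{A}^{*}$ satisfies $\mathcal{A}\cdot m=\{0\}$ (respectively $m\cdot\mathcal{A}=\{0\}$), then $m$ annihilates $\mathcal{A}^{2}$, and Cohen's factorization gives $\mathcal{A}^{2}=\mathcal{A}$, whence $m=0$. I expect the main obstacle to be the bookkeeping of the two-sided module action in $(ii)\Rightarrow(iii)$---in particular, selecting the right scalarizing bilinear form so that both the vanishing hypothesis and the associativity conclusion are compatible with the left-multiplier property, and recognizing that the separating property must be invoked on $f(ab)-f(a)\cdot b$ rather than on $f$ itself---rather than any deep analytic input.
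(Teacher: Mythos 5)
Your proposal is correct and follows essentially the same route as the paper: the cycle $(i)\Rightarrow(ii)\Rightarrow(iii)\Rightarrow(i)$ with the same constructions ($f(a)(c)=\varphi(a,c)$ for the first implication, the defect $f(ab)-f(a)\cdot b$ killed by the separating property for the second). Your only deviations are improvements in rigor: you scalarize via $g\in\mathcal{M}^{*}$ and Hahn--Banach where the paper applies the zero-product-determined condition directly to an $\mathcal{M}$-valued bilinear map, and you actually verify that $\mathcal{A}^{*}$ is separating (via the bounded approximate identity) where the paper dismisses $(iii)\Rightarrow(i)$ as evident.
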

\begin{proof}
  $(i)\Rightarrow (ii)$:   For any  continuous bilinear mapping $\varphi: \mathcal{A}\times \mathcal{A}\rightarrow \mathbb{C}$ with the property that $\varphi(x, y) = 0$ whenever $xy = 0$, let $f$ be a continuous linear mapping from $\mathcal{A}$ into $\mathcal{A}^{*}$ by $$f(a)(c)=\varphi(a, c)$$ for any $a,c\in \mathcal{A}.$ Then, if $ab=0$, for any $c\in \mathcal{A}$, we have $$(f(a)\cdot b)(c)=f(a) (bc)=\varphi(a, bc)=0.$$
  It follows that $f(a)\cdot b=0$. According to the assumption, $f$ is a left multiplier, i.e. $f(ab)=f(a)\cdot b$ for any $a,b\in \mathcal{A}.$

   By the definition of $f$, for any $a,b,c \in \mathcal{A}$, we have $$\varphi(a, bc)=(f(a)\cdot b)(c)=f(ab)(c)=\varphi(ab, c).$$ Hence, $\mathcal{A}$ is zero product determined.

   $(ii)\Rightarrow(iii)$:  Let $f$ be any continuous linear mapping from $\mathcal{A}$ into $\mathcal{M}$ satisfying $$ab=0\Rightarrow f(a)\cdot b=0,$$ for any $a,b\in \mathcal{A}$. Define a continuous bilinear mapping $\phi$ from $\mathcal{A}\times \mathcal{A}$ into $\mathcal{M}$ by

     $$\phi: \mathcal{A}\times \mathcal{A} \rightarrow  \mathcal{M}, (a, b) \mapsto f(ab)-f(a)\cdot b. $$

   If $ab=0$, then $\phi(a,b)=0.$

    Since $\mathcal{A}$ is zero product determined, $$\phi(ab,c)=\phi(a,bc),$$ that is, $$f(abc)-f(ab)\cdot c=f(abc)-f(a)\cdot bc$$ for any $a,b,c\in \mathcal{A}$. Hence, $$f(ab)\cdot c=f(a)\cdot bc,$$ that is,  $$(f(ab)-f(a)\cdot b)\cdot c=0$$ for any $a,b,c\in \mathcal{A}.$ Since $\mathcal{M}$ is a separating Banach $\mathcal{A}$-bimodule, it follows that $f(ab)=f(a)\cdot b$ for any $a,b\in \mathcal{A}.$

    $(iii)\Rightarrow (i)$: This is evident.
\end{proof}
\begin{remark}
  Theorem \ref{multiplierB} can be used to weaken the conditions of some conclusions. For example, in \cite[Proposition 3.2]{ncocycles2008}, it requires a unital Banach algebra $\mathcal{A}$ satisfies the following two conditions:

  \begin{itemize}
    \item [(i)] For every unital Banach $\mathcal{A}$-bimodule $\mathcal{X}$, a bounded operator $D: \mathcal{A}\rightarrow \mathcal{X}$ is a left multiplier if and only if $ab= 0$ implies $D(a)b = 0.$
    \item [(ii)] For every unital Banach $\mathcal{A}$-bimodule $\mathcal{X}$, a bounded operator $D: \mathcal{A}\rightarrow \mathcal{X}$ satisfies: for $a_{0}, a_{1}, a_{2}\in \mathcal{A}$, $a_{0}a_{1}=a_{1}a_{2}=0\Rightarrow a_{0}D(a_{1})a_{2}=0$ if and only if
$D(acb)-aD(cb)-D(ac)b + aD(c)b = 0$
for all $a, b, c \in \mathcal{A}.$
  \end{itemize}
  According to Theorem \ref{multiplierB}, if the condition $(i)$ is satisfied, then $\mathcal{A}$ is zero product determined, which means that condition $(ii)$ holds. Hence, in \cite[Proposition 3.2]{ncocycles2008}, the conclusion holds as long as condition $(i)$ is satisfied.
\end{remark}

\begin{proposition}
  Let $\mathcal{A}$ be a unital Banach algebra. For any $a,b\in \mathcal{A}$, define $a\circ b$ by $$a\circ b=\phi(a)b$$ where $\phi$ is a character of $\mathcal{A}$. Then $\mathcal{A}$ is a zero product determined Banach algebra.
\end{proposition}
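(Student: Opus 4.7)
The plan is to show directly that the new product $\circ$ forces any admissible bilinear $\varphi$ to factor through $\phi$.

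First I would verify the algebraic setup. A character on a Banach algebra is automatically continuous with $\phi(1)=1$, and $\|a\circ b\| = |\phi(a)|\,\|b\| \le \|a\|\,\|b\|$, so $(\mathcal{A},\circ)$ is a Banach algebra. The product is associative: $(a\circ b)\circ c = \phi(a)\phi(b)c = a\circ(b\circ c)$, and $1$ acts as a left unit. The crucial observation is that $a\circ b = \phi(a)b$ vanishes precisely when $\phi(a)=0$ or $b=0$. Since $\varphi(a,0)=0$ holds automatically by bilinearity, the hypothesis that $\varphi(a,b)=0$ whenever $a\circ b = 0$ reduces to the single condition
\begin{equation*}
\varphi(a,b) = 0 \quad \text{for all } a\in\ker\phi \text{ and all } b\in\mathcal{A}.
\end{equation*}

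Next I would exploit the decomposition $\mathcal{A} = \mathbb{C}\cdot 1 \oplus \ker\phi$, which follows from $\phi(1)=1$ via $a = \phi(a)\cdot 1 + (a-\phi(a)\cdot 1)$ with the second summand in $\ker\phi$. Applying $\varphi(\cdot,b)$ to this decomposition and using the reduced vanishing condition yields
\begin{equation*}
\varphi(a,b) = \phi(a)\,\varphi(1,b) + \varphi(a-\phi(a)\cdot 1,\,b) = \phi(a)\,\varphi(1,b).
\end{equation*}
Now define $\tau : \mathcal{A} \to \mathbb{C}$ by $\tau(b) = \varphi(1,b)$. Continuity of $\tau$ is immediate from continuity of $\varphi$, and
\begin{equation*}
\tau(a\circ b) = \tau(\phi(a) b) = \phi(a)\,\tau(b) = \varphi(a,b),
\end{equation*}
which is exactly the defining equation of a zero product determined Banach algebra.

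There is essentially no hard step here: the obstacle, if any, is purely conceptual, namely recognizing that the degenerate nature of $\circ$ makes the zero-product condition collapse to vanishing on $\ker\phi \times \mathcal{A}$, after which the splitting of $\mathcal{A}$ along the character does the rest. No bounded approximate identity or further structural hypothesis is needed.
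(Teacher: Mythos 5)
Your proposal is correct, and its core coincides with the paper's: both arguments rest on the observation that the zero-product hypothesis forces $\varphi$ to vanish on $\ker\phi\times\mathcal{A}$, and both then use the splitting $a=\phi(a)1+(a-\phi(a)1)$ to obtain the key identity $\varphi(a,b)=\phi(a)\varphi(1,b)$. Where you diverge is the final step. The paper uses this identity to verify the associativity-type condition $\varphi(a\circ b,c)=\varphi(a,b\circ c)$, i.e.\ the reformulation the paper labels as equivalent to the definition only in the presence of a bounded approximate identity; note that $(\mathcal{A},\circ)$ has $1$ as a left unit but in general no bounded right approximate identity, since $a\circ e_i=\phi(a)e_i$ cannot converge to $a$ for all $a$ unless $\mathcal{A}$ is one-dimensional. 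You instead exhibit the functional $\tau(b)=\varphi(1,b)$ explicitly and check $\varphi(a,b)=\tau(a\circ b)$ directly, which verifies the actual definition of zero product determined without invoking that equivalence. Your route is therefore slightly more self-contained and avoids a point the paper leaves implicit; otherwise the two proofs are the same computation.
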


\begin{proof}
  Let $\varphi$ be any continuous bilinear mapping from $\mathcal{A}\times \mathcal{A}$ into $\mathbb{C}$ satisfying $\varphi(a, b)=0$ whenever $a\circ b=0.$

  For any $a,b\in \mathcal{A}$, we obtain  that $$\varphi(a-\phi(a)1, b)=0,$$ that is,
  \begin{equation}\label{7}
    \varphi(a,b)=\phi(a)\varphi(1,b).
  \end{equation}
  Thus, by (\ref{7}),
    \begin{eqnarray*}
    \varphi(a\circ b, c) &{=}& \phi(a\circ b)\varphi(1,c)\\
      &=& \phi(a)\phi(b)\varphi(1,c) \\
     &=& \phi(a)\varphi(1, \phi(b)c) \\
    &=& \phi(a)\varphi(1, b\circ c) \\
    &=& \varphi(a, b\circ c),
    \end{eqnarray*}
which completes the proof.
\end{proof}

If  $\mathcal{A}$ and $\mathcal{B}$ are Banach algebras,  and $\varphi$ is a bounded homomorphism from $\mathcal{A}$ into $\mathcal{B}$, then the graph of $\varphi$, $\mathcal{G}=\{(a, \varphi(a)): a\in \mathcal{A}\}$,  is a Banach algebra  with the following norm and operations: $$\|(a,\varphi(a))\|_{1}=\|a\|_{\mathcal{A}}+\|\varphi(a)\|_{\mathcal{B}},$$$$(a, \varphi(a))+(b, \varphi(b))=(a+b, \varphi(a)+\varphi(b))=(a+b, \varphi(a+b)),$$ $$(a, \varphi(a))(b, \varphi(b))=(ab, \varphi(a)\varphi(b))=(ab, \varphi(ab)),$$  for any $a,b \in \mathcal{A}$.

\begin{proposition}\label{graph}
If $\mathcal{A}$ and $\mathcal{B}$ are Banach algebras, $\varphi$ is a bounded homomorphism from $\mathcal{A}$ into $\mathcal{B}$ and $\mathcal{A}$ is zero product determined with a bounded approximate identity, then $\mathcal{G}=\{(a, \varphi(a)): a\in \mathcal{A}\}$ is a zero product determined Banach algebra.
\end{proposition}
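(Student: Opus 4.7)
The plan is to observe that $\mathcal{G}$ is just $\mathcal{A}$ in disguise: the coordinate map $\pi:\mathcal{A}\to\mathcal{G}$, $a\mapsto(a,\varphi(a))$, is a bijective algebra homomorphism and a topological isomorphism, since
\[
\|a\|_{\mathcal{A}}\le\|\pi(a)\|_1=\|a\|_{\mathcal{A}}+\|\varphi(a)\|_{\mathcal{B}}\le(1+\|\varphi\|)\|a\|_{\mathcal{A}}.
\]
As a first consequence, if $(e_\alpha)$ is a bounded approximate identity for $\mathcal{A}$, then $(\pi(e_\alpha))=(e_\alpha,\varphi(e_\alpha))$ is a bounded approximate identity for $\mathcal{G}$, because $\pi$ is a bounded homomorphism with bounded inverse and a net $\pi(e_\alpha a)=\pi(e_\alpha)\pi(a)$ converges to $\pi(a)$ (and similarly on the right). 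Thus the equivalence of (\ref{eq2}) and (\ref{eq3}) is at our disposal in $\mathcal{G}$ as well.

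With this in hand, I would take any continuous bilinear functional $\psi:\mathcal{G}\times\mathcal{G}\to\mathbb{C}$ that vanishes on zero products and pull it back via $\pi$ to $\tilde\psi:\mathcal{A}\times\mathcal{A}\to\mathbb{C}$ defined by $\tilde\psi(a,b)=\psi(\pi(a),\pi(b))$. Since $ab=0$ forces $\pi(a)\pi(b)=\pi(ab)=0$, the form $\tilde\psi$ also vanishes on zero products. Because $\mathcal{A}$ is zero product determined with a bounded approximate identity, (\ref{eq3}) gives $\tilde\psi(ab,c)=\tilde\psi(a,bc)$ for all $a,b,c\in\mathcal{A}$. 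Since $\pi$ is multiplicative and surjective, this translates back to $\psi(xy,z)=\psi(x,yz)$ for every $x,y,z\in\mathcal{G}$. A second application of the BAI-equivalence, now in $\mathcal{G}$, yields a continuous linear functional $\tau$ on $\mathcal{G}$ with $\psi(x,y)=\tau(xy)$, so $\mathcal{G}$ is zero product determined.

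There is no substantive obstacle in this argument; the entire content is the transport of the zero-product-determined condition along the bounded algebra isomorphism $\pi$, together with the routine verification that the bounded approximate identity of $\mathcal{A}$ passes to $\mathcal{G}$. The only point requiring a little care is the explicit use of (\ref{eq3}) in both directions, since zero product determinedness is formulated via scalar-valued bilinear forms rather than as an obviously isomorphism-invariant property.
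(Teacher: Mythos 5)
Your proof is correct and follows essentially the same route as the paper: both transport the bilinear functional along the isomorphism $a\mapsto(a,\varphi(a))$, use that zero products correspond under this map, apply the zero product determined property of $\mathcal{A}$ to get the identity $\psi(xy,z)=\psi(x,yz)$ on $\mathcal{G}$, and invoke the bounded approximate identity (which transfers to $\mathcal{G}$) to conclude. Your write-up is if anything slightly more explicit than the paper's about the final appeal to the equivalence of the two formulations of the zero product determined condition.
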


\begin{proof}
We note that if $\{e_{i}\}$ is a bounded approximate identity of $\mathcal{A}$, then it is easy to check that $\{(e_{i}, \varphi(e_{i}))\}$ is a bounded approximate identity of $\mathcal{G}$.

  Let $\phi$ be a continuous bilinear mapping from $\mathcal{G}\times \mathcal{G}$ into $\mathbb{C}$ satisfying $\phi((a, \varphi(a)), (b, \varphi(b)))=0$ whenever $(a, \varphi(a))(b, \varphi(b))=0.$

  It is observed that  $$(a, \varphi(a))(b, \varphi(b))=0\Leftrightarrow ab=0,$$
   then we can define a mapping $\Phi$ from $\mathcal{A}\times \mathcal{A}$ into $\mathbb{C}$ by $$\Phi(a,b)=\phi((a, \varphi(a)), (b, \varphi(b))),$$ for any $a,b\in \mathcal{A}.$

  It is easy to see that $\Phi$ is a bounded bilinear mapping, and $\Phi(a,b)=0$ when $ab=0$ for any $a,b \in \mathcal{A}$.

  Since $\mathcal{A}$ is zero product determined, for any $a,b,c \in \mathcal{A}$, $$\Phi(ab,c)=\Phi(a,bc),$$
  that is, $$\phi((ab, \varphi(ab)), (c, \varphi(c)))=\phi((a, \varphi(a)),(bc, \varphi(bc))).$$
  Expanding the above equation, we obtain
  \begin{eqnarray*}
    \phi((a, \varphi(a))(b, \varphi(b)), (c, \varphi(c))) &=& \phi((ab, \varphi(ab)), (c, \varphi(c))) \\
     &=& \phi((a, \varphi(a)),(bc, \varphi(bc))) \\
    &=& \phi((a, \varphi(a)),(b, \varphi(b))(c, \varphi(c))).
  \end{eqnarray*}
  Hence,  $\mathcal{G}$ is zero product determined.
\end{proof}

Suppose $\mathcal{A}$ is a Banach algebra,  $\mathcal{M}$ is a Banach $\mathcal{A}$-bimodule, and  $\delta: \mathcal{A}\rightarrow \mathcal{M}$ is a bounded derivation. Then the graph of $\delta$, $\mathcal{G}=\{(a, \delta(a)): a\in \mathcal{A}\}$, is a Banach  algebra with the following norm and operations: $$\|(a, \delta(a))\|_{1}=\|a\|_{\mathcal{A}}+\|\delta(a)\|_{\mathcal{M}},$$  $$(a, \delta(a))+(b, \delta(b))=(a+b, \delta(a)+\delta(b))=(a+b, \delta(a+b)),$$ $$(a, \delta(a))(b, \delta(b))=(ab, \delta(a)b+a\delta(b))=(ab, \delta(ab)),$$ for any $a,b\in \mathcal{A}.$

 Similar process as Proposition \ref{graph}, we obtain the following result without giving the proof.
\begin{proposition}\label{0000}
  If $\mathcal{A}$ is a  Banach algebra which is zero product determined and has a bounded approximate identity, $\mathcal{M}$ is a Banach $\mathcal{A}$-bimodule,  and $\delta$ is a bounded derivation from $\mathcal{A}$ into $\mathcal{M}$, then $\mathcal{G}=\{(a, \delta(a)): a\in \mathcal{A}\}$ is a zero product determined Banach algebra.
\end{proposition}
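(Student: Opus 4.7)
The plan is to follow exactly the same strategy as in Proposition \ref{graph}, replacing the homomorphism by the derivation and using the derivation identity to observe that $(a,\delta(a))(b,\delta(b))=(ab,\delta(a)b+a\delta(b))=(ab,\delta(ab))$ still lives in $\mathcal{G}$. In particular, the product in $\mathcal{G}$ is governed entirely by the product in $\mathcal{A}$, so $(a,\delta(a))(b,\delta(b))=0$ if and only if $ab=0$, which is the key link that will let us transport the zero product test from $\mathcal{G}$ to $\mathcal{A}$.

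First I would verify that $\mathcal{G}$ inherits a bounded approximate identity from $\mathcal{A}$, so that the equivalent form (\ref{eq3}) of zero product determinedness is available in $\mathcal{G}$. If $\{e_i\}$ is a bounded approximate identity for $\mathcal{A}$, then $\|(e_i,\delta(e_i))\|_{1}=\|e_i\|_{\mathcal{A}}+\|\delta(e_i)\|_{\mathcal{M}}\leq(1+\|\delta\|)\sup_i\|e_i\|_{\mathcal{A}}$ is uniformly bounded, and for any $(a,\delta(a))\in\mathcal{G}$ we have $(e_i,\delta(e_i))(a,\delta(a))=(e_ia,\delta(e_ia))\to(a,\delta(a))$ in the graph norm by continuity of $\delta$, and similarly on the right.

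Next, given any continuous bilinear $\phi:\mathcal{G}\times\mathcal{G}\to\mathbb{C}$ satisfying $\phi((a,\delta(a)),(b,\delta(b)))=0$ whenever $(a,\delta(a))(b,\delta(b))=0$, I would define $\Phi(a,b)=\phi((a,\delta(a)),(b,\delta(b)))$. Since $\delta$ is bounded, $\Phi$ is a continuous bilinear form on $\mathcal{A}\times\mathcal{A}$, and by the observation above $\Phi(a,b)=0$ whenever $ab=0$. Applying zero product determinedness of $\mathcal{A}$ in the form (\ref{eq3}) gives $\Phi(ab,c)=\Phi(a,bc)$, which upon re-expressing in $\mathcal{G}$ reads
$$\phi\bigl((a,\delta(a))(b,\delta(b)),(c,\delta(c))\bigr)=\phi\bigl((a,\delta(a)),(b,\delta(b))(c,\delta(c))\bigr).$$
Combined with the bounded approximate identity for $\mathcal{G}$, this establishes that $\mathcal{G}$ is zero product determined.

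The only point that is not completely mechanical is the verification that the bounded approximate identity of $\mathcal{A}$ lifts to one in $\mathcal{G}$; everything else is a direct transport of the bilinear form between the two algebras, exactly as in Proposition \ref{graph}. I expect no essential obstacle, which is consistent with the author's remark that the proof is omitted because the argument parallels that of Proposition \ref{graph}.
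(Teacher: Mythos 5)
Your proposal is correct and follows exactly the route the paper intends: the paper omits the proof of Proposition \ref{0000} precisely because it is the same transport-of-bilinear-forms argument as Proposition \ref{graph}, resting on the observation that $(a,\delta(a))(b,\delta(b))=(ab,\delta(ab))$ vanishes if and only if $ab=0$, together with the lifted bounded approximate identity $\{(e_i,\delta(e_i))\}$. Your added verification that this lift is indeed a bounded approximate identity for $\mathcal{G}$ (using boundedness and continuity of $\delta$) is the one detail the paper glosses over, and you handle it correctly.
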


\begin{proposition}
  Suppose $\mathcal{H}$ is a complex Hilbert space, $\mathcal{A}$ is a  Banach subalgebra of  $B(\mathcal{H})$ which is zero product determined and  has a bounded left approximate identity. If $P, Q\in \mathrm{Lat} \mathcal{A}$, where $\mathrm{Lat} \mathcal{A}$ denotes the set of invariant subspaces of $\mathcal{A}$, then $\mathcal{B}=\overline{(P-Q)\mathcal{A}(P-Q)}^{\|\cdot\|}$
  is a zero product determined Banach algebra.
\end{proposition}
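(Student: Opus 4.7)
The plan is to verify that $\mathcal{B}$ is actually a Banach algebra with a bounded left approximate identity and then to pull the zero-product test on $\mathcal{B}$ back to that on $\mathcal{A}$ via the compression $A \mapsto EAE$, where $E := P - Q$. I read the hypothesis as assuming $Q \leq P$ (otherwise $(P-Q)\mathcal{A}(P-Q)$ need not even be closed under multiplication), so that $E$ is the orthogonal projection onto the semi-invariant subspace $P\mathcal{H} \ominus Q\mathcal{H}$. Sarason's semi-invariance lemma then yields the identity $(EAE)(EBE) = E(AB)E$ for all $A, B \in \mathcal{A}$, which is immediate by writing every element of $\mathcal{A}$ as a block upper-triangular matrix in the decomposition $\mathcal{H} = Q\mathcal{H} \oplus E\mathcal{H} \oplus P^{\perp}\mathcal{H}$. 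Consequently $E\mathcal{A}E$ is a subalgebra of $B(\mathcal{H})$ whose norm closure is $\mathcal{B}$. The same identity transfers the bounded left approximate identity: if $\{e_i\}$ is a BLAI of $\mathcal{A}$, then $\{Ee_iE\}$ is bounded by $\sup_i\|e_i\|$ (since $\|E\|\leq 1$) and satisfies $(Ee_iE)(EAE) = E(e_iA)E \to EAE$ in norm, giving a BLAI of $\mathcal{B}$.

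For the main claim, let $\varphi \colon \mathcal{B} \times \mathcal{B} \to \mathbb{C}$ be continuous, bilinear, and zero on products. Define $\widetilde{\varphi} \colon \mathcal{A} \times \mathcal{A} \to \mathbb{C}$ by $\widetilde{\varphi}(A, B) := \varphi(EAE, EBE)$; it is continuous and bilinear. Whenever $AB = 0$ in $\mathcal{A}$, the Sarason identity gives $EAE \cdot EBE = E(AB)E = 0$, so $\widetilde{\varphi}(A, B) = 0$. Since $\mathcal{A}$ is zero product determined, there is $\tau_0 \in \mathcal{A}^*$ with $\widetilde{\varphi}(A, B) = \tau_0(AB)$; in particular $\widetilde{\varphi}(AB, C) = \widetilde{\varphi}(A, BC)$ for every $A, B, C \in \mathcal{A}$. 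Translating through the Sarason identity yields $\varphi(xy, z) = \varphi(x, yz)$ for all $x, y, z$ in the dense subalgebra $E\mathcal{A}E$, and continuity of $\varphi$ extends this to all of $\mathcal{B}$. Applying the equivalence of (\ref{eq2}) and (\ref{eq3}) to $\mathcal{B}$ (valid because $\mathcal{B}$ has a bounded one-sided approximate identity and hence admits Cohen factorization) produces the required $\tau \in \mathcal{B}^*$ with $\varphi(x, y) = \tau(xy)$.

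The main obstacle is the algebraic step: establishing Sarason's identity $(EAE)(EBE) = E(AB)E$ and thereby that $E\mathcal{A}E$ is closed under multiplication. This really does require $P$ and $Q$ to be comparable; a short three-dimensional example (for instance, taking $\mathcal{A}$ to be the algebra of upper-triangular $3\times 3$ matrices preserving two incomparable one-dimensional subspaces) already shows that $E\mathcal{A}E$ need not be a subalgebra of $B(\mathcal{H})$ when $P$ and $Q$ are incomparable invariant subspaces. Once the Sarason identity is in place, transferring the bounded left approximate identity and pulling the zero-product condition back from $\mathcal{B}$ to $\mathcal{A}$ are routine, and the argument concludes by invoking the ``associativity'' characterization of zero product determined Banach algebras with a bounded approximate identity (the equivalence of (\ref{eq2}) and (\ref{eq3}) from Section~\ref{sec3}).
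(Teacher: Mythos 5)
Your proof is correct and follows essentially the same route as the paper: both hinge on the fact that the compression $A\mapsto (P-Q)A(P-Q)$ is a bounded homomorphism of $\mathcal{A}$ onto a dense subalgebra of $\mathcal{B}$, the paper then simply citing \cite[Corollary 5.7]{bresarbook} for the transfer of the zero product determined property along such a homomorphism, which you instead reprove by hand via the pullback $\widetilde{\varphi}(A,B)=\varphi(EAE,EBE)$ together with the approximate-identity argument. Your observation that the comparability $Q\leq P$ is genuinely needed (otherwise $E=P-Q$ is not a projection, Sarason's identity fails, and $(P-Q)\mathcal{A}(P-Q)$ need not be multiplicatively closed) is correct and identifies an implicit hypothesis on which the paper's one-line proof also relies without stating it.
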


\begin{proof}
   Define $$\Phi: \mathcal{A}\rightarrow (P-Q)\mathcal{A}(P-Q)$$ by $$A\mapsto (P-Q)A(P-Q),$$ for any $A\in \mathcal{A}$.
    Then $\phi$ is a bounded homomorphism, according to \cite[Corollary 5.7]{bresarbook}, $\mathcal{B}$ is zero product determined.
\end{proof}

The next purpose is to prove some non-self-adjoint algebras are zero product determined.

 A subspace lattice $\mathcal{L}$ on $\mathcal{H}$ is called  \textit{completely distributive commutative subspace lattice} if it is completely distributive and commutative.  A set $\mathcal{L}=\{0, M, N, \mathcal{H}\}$ is called \textit{subspace lattice with two atoms}  if $M\wedge N=\{0\}$ and $M\vee N=\mathcal{H}.$

It is known that every nest and complete atomic Boolean lattice is a completely distributive commutative subspace lattice.

\begin{theorem}\label{cdcslB}
  If $\mathcal{L}$ is a completely distributive commutative subspace lattice or a subspace lattice with two atoms, then $\mathrm{Alg}\mathcal{L}$ is zero product determined.
\end{theorem}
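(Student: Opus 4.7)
Let $\varphi\colon \mathrm{Alg}\mathcal{L}\times \mathrm{Alg}\mathcal{L}\to \mathbb{C}$ be a continuous bilinear functional satisfying $\varphi(x,y)=0$ whenever $xy=0$. Since $\mathrm{Alg}\mathcal{L}$ is unital, by identity (\ref{eq3}) it is enough to establish the associativity relation $\varphi(xy,z)=\varphi(x,yz)$ for all $x,y,z\in \mathrm{Alg}\mathcal{L}$. My plan is to first verify this relation when the middle argument is a rank one operator of $\mathrm{Alg}\mathcal{L}$, extend by bilinearity to the subalgebra generated by rank ones, and finally propagate from this subalgebra to all of $\mathrm{Alg}\mathcal{L}$ using both the strong rank one density property of $\mathcal{L}$ and the multiplier identification $\mathrm{Alg}\mathcal{L}\cong \mathcal{M}(\mathrm{Alg}\mathcal{L}\cap \mathcal{K}(\mathcal{H}))$ furnished by Theorem \ref{strongone}.

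For the rank one case, write $R=\xi\otimes f\in \mathrm{Alg}\mathcal{L}$, where $(\xi\otimes f)(h)=\langle h,f\rangle \xi$, and observe that for arbitrary $x,z\in \mathrm{Alg}\mathcal{L}$ one has $xR=(x\xi)\otimes f$, $Rz=\xi\otimes(z^{*}f)$, and $xRz=(x\xi)\otimes(z^{*}f)$, all of rank at most one. For any two rank ones $R_{1}=\eta\otimes g$ and $R_{2}=\xi\otimes f$ in $\mathrm{Alg}\mathcal{L}$ one has $R_{1}R_{2}=\langle\xi,g\rangle(\eta\otimes f)$, which vanishes precisely when $\langle\xi,g\rangle=0$, forcing $\varphi(R_{1},R_{2})=0$ in that case. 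Expanding $\varphi(xR,z)$ and $\varphi(x,Rz)$ as scalar combinations of a common rank one symbol, matching coefficients by invoking these vanishing conditions, and using bilinearity should yield the identity $\varphi(xR,z)=\varphi(x,Rz)$ for every rank one $R\in \mathrm{Alg}\mathcal{L}$. Linearity then lifts the identity to arbitrary $y$ in the subalgebra $\mathcal{R}$ generated by rank one operators of $\mathrm{Alg}\mathcal{L}$.

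The extension to arbitrary $y\in \mathrm{Alg}\mathcal{L}$ proceeds in two stages. First, within the compact subalgebra $\mathcal{A}_{0}:=\mathrm{Alg}\mathcal{L}\cap \mathcal{K}(\mathcal{H})$, the strong rank one density property implies that $\mathcal{R}$ is norm-dense (an Erd\H{o}s-type consequence that passes to the compact part), so the norm continuity of $\varphi$ delivers the associativity identity for all $y\in \mathcal{A}_{0}$. Second, to reach arbitrary $y\in \mathrm{Alg}\mathcal{L}$, I would use Theorem \ref{strongone} to view $y$ as a two-sided multiplier of $\mathcal{A}_{0}$; fixing $x,z\in \mathrm{Alg}\mathcal{L}$, one sandwiches $y$ between elements of $\mathcal{A}_{0}$ using rank one approximants from the strong rank one density property and uses the fact that $\mathcal{A}_{0}$ is a two-sided ideal of $\mathrm{Alg}\mathcal{L}$ to transfer the identity to the whole algebra.

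The principal obstacle is exactly this final step: the rank one density is in the strong operator topology, whereas $\varphi$ is only norm continuous, so a naive SOT-limit argument on $\varphi$ fails. My proposal is to factor the argument through the ideal $\mathcal{A}_{0}$ using the multiplier structure from Theorem \ref{strongone}, exploiting the norm-density available within $\mathcal{A}_{0}$ together with the ideal property to propagate associativity out to the full multiplier algebra without directly appealing to SOT-limits on $\varphi$. If this propagation turns out to be delicate, an alternative is to reformulate the problem via Theorem \ref{multiplierB}(i) and instead prove that every bounded linear map $f\colon \mathrm{Alg}\mathcal{L}\to (\mathrm{Alg}\mathcal{L})^{*}$ satisfying $ab=0\Rightarrow f(a)\cdot b=0$ is a left multiplier, which is a pointwise statement better suited to the multiplier framework of Theorem \ref{strongone}.
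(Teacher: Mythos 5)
Your high-level architecture (pass to the compact ideal $\mathcal{A}_{0}=\mathrm{Alg}\mathcal{L}\cap\mathcal{K}(\mathcal{H})$, use norm-density of a nice generating set there, then climb back up to $\mathrm{Alg}\mathcal{L}$ via the multiplier identification of Theorem \ref{strongone}) is parallel to the paper's, but the two load-bearing steps are asserted rather than proved, and the first one has a real hole. The paper's proof is entirely abstract: by Corollary \ref{CDCSL} and a cited result of Bre\v{s}ar's book, $\mathrm{Alg}\mathcal{L}$ is zero product determined iff $\mathcal{M}(\mathcal{A}_{0})$ is; by the Hadwin--Li lemma \cite[Lemma 2.3]{JK}, $\mathcal{A}_{0}$ is the \emph{norm-closed span of idempotents}; since $I-2e$ is doubly power-bounded for every idempotent $e$, property $\mathbb{A}$ holds and \cite[Theorem 5.20]{bresarbook} finishes the argument. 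No computation with individual rank one operators is needed.

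The first gap is your ``matching coefficients'' step for a rank one middle entry $R=\xi\otimes f$. The standard computation only works for idempotents: $(x-xe)(ey)=0$ and $(xe)(y-ey)=0$ give $\varphi(xe,y)=\varphi(x,ey)$. If $\langle\xi,f\rangle\neq 0$ then $R$ is a scalar multiple of an idempotent and you are fine, but if $R$ is square-zero ($\langle\xi,f\rangle=0$) the zero-product hypothesis gives you nothing comparable, and it is not automatic that such an $R$ can be split inside $\mathrm{Alg}\mathcal{L}$ into rank ones with nonzero trace (the obvious perturbation $\xi\otimes(f+g)$ requires $\xi\notin E_{-}$, which can fail). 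This is exactly why the paper works with idempotents and cites the fact that their span is norm-dense in $\mathcal{A}_{0}$, rather than with arbitrary rank ones; you would need that input (or a square-zero workaround) to close this step. The second gap is the final propagation: you correctly identify that SOT-density of rank one approximants is useless against a merely norm-continuous $\varphi$, but ``sandwiching $y$ between elements of $\mathcal{A}_{0}$'' is not yet an argument --- knowing $\varphi(xy,z)=\varphi(x,yz)$ for $y\in\mathcal{A}_{0}$ does not, by any continuity available to you, yield it for general $y$. What actually does the work here is the theorem that zero product determinedness passes from a Banach algebra with a bounded approximate identity to its multiplier algebra (one builds $\tau$ on $\mathcal{M}(\mathcal{A}_{0})$ as a weak-$*$ limit along the approximate identity and checks the identity strictly); you must either invoke that result, as the paper does, or reprove it. Your fallback via Theorem \ref{multiplierB}(i) does not escape the problem, since proving (i) for $\mathrm{Alg}\mathcal{L}$ runs into the same two density issues.
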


\begin{remark}
Theorem \ref{cdcslB} can be used to deduced  some significant consequences. In \cite{commutators}, the authors discuss the connection between commutators and square-zero elements in Banach algebras. Recall that, for an algebra $\mathcal{A}$, a \textit{commutator} in $\mathcal{A}$ is an element of the form $[x, y]:=xy-yx$ and a \textit{square-zero element} in $\mathcal{A}$ is an element $x$ such that $x^{2} = 0.$  Let $\mathcal{L}$ be as in Theorem \ref{cdcslB}, by \cite[Theorem 2.1]{commutators}, we obtain every commutator in Alg$\mathcal{L}$ lies in the closed linear span of square-zero elements.
\end{remark}
The proof of the main theorem will be given in the next subsection. To prove Theorem \ref{cdcslB}, we need  some conclusions about multiplier algebras that would be studied in the following.

\subsection{Multiplier algebras}\label{sec2}

 Let $\mathcal{A}$ be a complex Hausdorff topological algebra in which multiplication is associative and separately continuous. We call an algebra $\mathcal{A}$ is \textit{faithful} if, for any  $a\in \mathcal{A}$, $a\mathcal{A}=\mathcal{A}a=\{0\}$ implies $a=0.$

  Let $\mathcal{K}(\mathcal{H})$ be the set of compact operators in $B(\mathcal{H})$. If $\mathcal{A}$ is a subalgebra of $B(\mathcal{H})$, then $\mathcal{A} \cap \mathcal{K}(\mathcal{H})$ is the set of  all compact operators belonging to $\mathcal{A}$.


A \textit{multiplier} on $\mathcal{A}$ is a pair $(L, R): \mathcal{A}\rightarrow \mathcal{A}$ where $L$ and $R$ are linear mappings such that, for all $a,b\in \mathcal{A}$,
$$L(ab)=L(a)b, R(ab)=aR(b), \textrm{and}\ aL(b)=R(a)b.$$
Let $\mathcal{M}(\mathcal{A})$ be the unital algebra of all multipliers on $\mathcal{A}$, and it is called the \textit{multiplier algebra} of $\mathcal{A}.$ If $\mathcal{A}$ is faithful, then $\mathcal{A}$ can be considered as a subalgebra of $\mathcal{M}(\mathcal{A})$ by the following action:
$$\mathcal{A}\rightarrow \mathcal{M}(\mathcal{A}), a\mapsto (L_{a}, R_{a}),$$ where $L_{a}(b)=ab, R_{a}(b)=ba$ for all $a,b\in \mathcal{A}.$ This mapping is an isomorphism from $\mathcal{A}$ into $\mathcal{M}(\mathcal{A}).$ For the convenience of the reader we refer the relevant material \cite{multiplierstopo, MT,murphy}.


Let $\mathcal{A}$ be a faithful Banach algebra and $\mathcal{M}(\mathcal{A})$  the multiplier algebra of $\mathcal{A}$, then the \textit{strict topology} $s$ for $\mathcal{M}(\mathcal{A})$ is defined to be that locally convex topology generated by the seminorms $(\lambda_{a})_{a\in \mathcal{A}}$ and $(\rho_{a})_{a\in \mathcal{A}}$, where
$\lambda_{a}(x)=\|ax\|$ and $\rho_{a}(x)=\|xa\|$ for any $x\in \mathcal{M}(\mathcal{A})$.

%

Recall that if $\mathcal{A}$ is complete and has two-sided approximate identity (not necessarily bounded), then $\mathcal{A}$ is dense in $\mathcal{M}(\mathcal{A})$ under the strict topology (\cite[Theorem 3.2]{multiplierstopo}).






\begin{theorem}\label{strongone}
  If $\mathcal{L}$ is a subspace lattice with the strong rank one density property on a complex separable Hilbert space $\mathcal{H}$. Then $\mathcal{M}(\mathrm{Alg} \mathcal{L}\cap \mathcal{K}(\mathcal{H}))$ and $\mathrm{Alg}\mathcal{L}$ are isomorphic.
\end{theorem}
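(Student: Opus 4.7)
The plan is to establish that the natural embedding
$$\Phi\colon \mathrm{Alg}\mathcal{L}\to \mathcal{M}(\mathcal{A}),\qquad \Phi(T)=(L_T,R_T),$$
where $\mathcal{A}:=\mathrm{Alg}\mathcal{L}\cap\mathcal{K}(\mathcal{H})$, $L_T(a)=Ta$, and $R_T(a)=aT$, is a bijective bounded algebra homomorphism, hence the desired isomorphism. Since $\mathcal{K}(\mathcal{H})$ is a two-sided ideal in $B(\mathcal{H})$, the set $\mathcal{A}$ is a two-sided ideal in $\mathrm{Alg}\mathcal{L}$, which makes $\Phi$ well-defined and clearly continuous. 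The strong rank-one density hypothesis will be used repeatedly: it implies that the linear span of the rank-one operators in $\mathrm{Alg}\mathcal{L}$, which sits inside $\mathcal{A}$, is SOT-dense in $\mathrm{Alg}\mathcal{L}$, so in particular there is a net $\{e_\lambda\}\subseteq\mathcal{A}$ with $e_\lambda\to I$ in the SOT.

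Injectivity follows immediately: if $\Phi(T)=0$ then $Te_\lambda=0$ for every $\lambda$, so $Tx=\lim_\lambda Te_\lambda x=0$ for every $x\in\mathcal{H}$. Surjectivity is the substantive step. Given $(L,R)\in\mathcal{M}(\mathcal{A})$, I would build $T\in\mathrm{Alg}\mathcal{L}$ by analyzing $L$ on rank-one operators. For any rank-ones $\xi\otimes\eta,\xi'\otimes\eta'\in\mathrm{Alg}\mathcal{L}$, the product identity $(\xi\otimes\eta)(\xi'\otimes\eta')=\langle\xi',\eta\rangle\,\xi\otimes\eta'$ combined with $L(ab)=L(a)b$ gives
$$\bigl(L(\xi\otimes\eta)\xi'\bigr)\otimes\eta'=\langle\xi',\eta\rangle\,L(\xi\otimes\eta').$$
Varying the auxiliary pair $(\xi',\eta')$, for which the strong rank-one density property provides enough flexibility, should pin down $L(\xi\otimes\eta)=T_0\xi\otimes\eta$ for a vector $T_0\xi$ depending only on $\xi$. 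The estimate $\|T_0\xi\|\,\|\eta\|=\|L(\xi\otimes\eta)\|\le\|L\|\,\|\xi\|\,\|\eta\|$ yields $\|T_0\xi\|\le\|L\|\,\|\xi\|$, and density of the linear span of such $\xi$ in $\mathcal{H}$ lets $T_0$ extend to a bounded operator $T\in B(\mathcal{H})$.

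The identity $L(a)=Ta$ then holds on rank-ones by construction, on their linear span by linearity, and on all of $\mathcal{A}$ by continuity of $L$ together with norm approximation of compact operators in $\mathrm{Alg}\mathcal{L}$ by finite-rank operators of $\mathrm{Alg}\mathcal{L}$. A symmetric analysis, or direct exploitation of the compatibility $aL(b)=R(a)b$, yields $R(a)=aT$. To conclude $T\in\mathrm{Alg}\mathcal{L}$, I would fix $M\in\mathcal{L}$ and $x\in M$: since $e_\lambda\in\mathrm{Alg}\mathcal{L}$ one has $e_\lambda x\in M$, whence $Te_\lambda x=L(e_\lambda)x\in M$ because $L(e_\lambda)\in\mathcal{A}\subseteq\mathrm{Alg}\mathcal{L}$, and passing to the limit places $Tx$ in the closed subspace $M$. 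Continuity of $\Phi$ plus the open mapping theorem then upgrades the algebraic bijection to a Banach algebra isomorphism.

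The main obstacle I expect is the rank-one reduction itself: proving that $T_0\xi$ is genuinely independent of the auxiliary choices of $(\xi',\eta')$ and of $\eta$, that it is linear in $\xi$, and that its domain of definition has dense linear span in $\mathcal{H}$. All of these points reduce to finding sufficiently many rank-one operators in $\mathrm{Alg}\mathcal{L}$ with prescribed right vectors and with first vectors having nonzero inner product against a given vector, which is exactly what the strong rank-one density property is designed to supply.
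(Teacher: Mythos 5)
Your route to surjectivity is genuinely different from the paper's: the paper obtains the implementing operator $x$ as a $\sigma$-weak accumulation point of the bounded nets $\{L(e_\alpha)\}$ and $\{R(e_\alpha)\}$ (where $e_\alpha\to I$ $\sigma$-weakly, supplied by Goldstine's theorem), and only afterwards uses rank ones tending to $I$ in the strong operator topology to identify $L=L_x$ and $R=R_x$; you instead try to build $T$ pointwise from the action of $L$ on individual rank-one operators. The problem is that your central step, the rank-one reduction, is left open, and the mechanism you propose for closing it does not work. To deduce $L(\xi\otimes\eta)=(T_0\xi)\otimes\eta$ from $\bigl(L(\xi\otimes\eta)\xi'\bigr)\otimes\eta'=\langle\xi',\eta\rangle\,L(\xi\otimes\eta')$ you must take $\eta'=\eta$ and produce $\xi'$ with $\xi'\otimes\eta\in\mathrm{Alg}\mathcal{L}$ and $\langle\xi',\eta\rangle\neq0$. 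By the standard characterization of rank ones in $\mathrm{Alg}\mathcal{L}$, such a $\xi'$ must lie in some $K\in\mathcal{L}$ for which $\eta$ annihilates every $L\in\mathcal{L}$ with $K\not\subseteq L$; you therefore need $\eta\not\perp K$ for at least one admissible $K$, and nothing in the hypothesis rules out that $\eta$ is orthogonal to every admissible $K$. The strong rank one density property is a statement about SOT-density of the algebra generated by \emph{all} rank ones of $\mathrm{Alg}\mathcal{L}$; it is not an existence statement about rank ones with a \emph{prescribed} right vector $\eta$ and left vector non-orthogonal to $\eta$, which is what your reduction requires. (A workable substitute is the compatibility relation: $aL(\xi\otimes\eta)=R(a)(\xi\otimes\eta)=(R(a)\xi)\otimes\eta$ for all $a$, so letting $a$ run through a net in the rank-one algebra tending strongly to $I$ forces $L(\xi\otimes\eta)$ to have the form $v\otimes\eta$; but that is a different argument from the one you outline, and the independence of $v$ from $\eta$ and the linearity in $\xi$ still require care.)

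A second unsupported step is the extension of $L(a)=Ta$ from the span of rank ones to all of $\mathrm{Alg}\mathcal{L}\cap\mathcal{K}(\mathcal{H})$ ``by norm approximation of compact operators in $\mathrm{Alg}\mathcal{L}$ by finite-rank operators of $\mathrm{Alg}\mathcal{L}$.'' Norm-density of the finite ranks of $\mathrm{Alg}\mathcal{L}$ in its compact part is a substantial theorem even for nest algebras (Erd\H{o}s' density theorem) and is not a formal consequence of the strong rank one density property, which only yields SOT-density in $\mathrm{Alg}\mathcal{L}$. The paper sidesteps both issues by establishing identities of the form $aL(b)=axb=R(a)b$ with the approximating rank ones placed on the outside and then letting them tend strongly to $I$, never identifying $L$ on a single rank one and never approximating compacts in norm. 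Your injectivity argument and your verification that $T$ leaves each $M\in\mathcal{L}$ invariant are fine; until the two points above are supplied, the surjectivity half is incomplete.
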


\begin{proof}
 We define a mapping $\varphi$ from $\mathrm{Alg} \mathcal{L}$ to $\mathcal{M}(\mathrm{Alg} \mathcal{L}\cap \mathcal{K}(\mathcal{H}))$ by $$a\mapsto (L_{a}, R_{a}).$$ Obviously,  this definition is well defined, and it is easy to check that $\varphi$ is a homomorphism.

 We claim this mapping is faithful. Suppose $(L_{a}, R_{a})=0$, then $L_{a}b=0$ for any $b\in \mathrm{Alg} \mathcal{L}\cap \mathcal{K}(\mathcal{H}).$ Observe that $(\mathrm{Alg} \mathcal{L}\cap \mathcal{K}(\mathcal{H}))^{**}=\mathrm{Alg} \mathcal{L}$, by Goldstine’s Theorem, the unit ball of $\mathrm{Alg} \mathcal{L}\cap \mathcal{K}(\mathcal{H})$ is $\sigma$-weak dense in the unit ball
of $\mathrm{Alg} \mathcal{L}$, so we can choose a contractive sequence $\{e_{\alpha}\}$ in $\mathrm{Alg} \mathcal{L}\cap \mathcal{K}(\mathcal{H})$, which is $\sigma$-weak-convergent to identity, such that $L_{a}e_{\alpha}=ae_{\alpha}=0$ for every $\alpha$. Thus, $a=0$.


It remains to verity that $\textrm{Im}\,\varphi=\mathcal{M}(\mathrm{Alg} \mathcal{L}\cap \mathcal{K}(\mathcal{H}))$. Consider an arbitrary element $(L, R)\in \mathcal{M}(\mathrm{Alg} \mathcal{L}\cap \mathcal{K}(\mathcal{H}))$. Then the sequences $\{L(e_{\alpha})\}$ and $\{R(e_{\alpha})\}$ are bounded by $\|L\|$ and $\|R\|$, in fact $\|L\|=\|R\|$, respectively. Since  $\mathrm{Alg} \mathcal{L}$ is a $\sigma$-weak closed subalgebra of $B(\mathcal{H})$ and the unit ball of $B(\mathcal{H})$  is $\sigma$-weak compact, $\{L(e_{\alpha})\}$ and $\{R(e_{\alpha})\}$ have points of accumulation $x_{L}\in \mathrm{Alg} \mathcal{L}$ and $x_{R}\in \mathrm{Alg} \mathcal{L}$. Passing, if necessary, to sub-sequences, we can suppose without loss of generality that
$$x_{L}=\lim\limits_{\alpha} L(e_{\alpha}), \ \ x_{R}=\lim\limits_{\alpha} R(e_{\alpha}),$$ in the $\sigma$-weak topology and $\|x_{L}\|\leq \|L\|$, $\|x_{R}\|\leq \|R\|$.

We claim that $x_{L}=x_{R}.$ Indeed, for any $a,b\in \mathrm{Alg} \mathcal{L}\cap \mathcal{F}_{1}(\mathcal{H})$, where $\mathcal{F}_{1}(\mathcal{H})$ is the set of all rank one operators in $B(\mathcal{H})$, Since  $\{e_{\alpha}\}$ converges to identity in the $\sigma$-weak topology, $\{e_{\alpha}^{*}\}$ converges to identity in the $\sigma$-weak topology. For any rank one operator $a$, $\{e_{\alpha}^{*}a^{*}\}$ converges to $a^{*}$ in norm, hence, $\{ae_{\alpha}\}$ converges to $a$ in norm. So, we have
\begin{equation}\label{aaa}
  ax_{L}b=a \lim\limits_{\alpha} L(e_{\alpha}) b=a \lim\limits_{\alpha} L(e_{\alpha}b)=aL(b)=R(a)b.
\end{equation}
\begin{equation}\label{bbb}
  ax_{R}b=a \lim\limits_{\alpha} R(e_{\alpha})b=\lim\limits_{\alpha} R(ae_{\alpha})b=R(a)b,
\end{equation}
 Hence, by  (\ref{aaa}) and (\ref{bbb}), if $a,b \in \mathrm{Alg} \mathcal{L}$ are rank one operators, we have
 \begin{equation}\label{777}
   ax_{L}b=ax_{R}b.
 \end{equation}

 Since $\mathrm{Alg} \mathcal{L}$ has the strong rank one property, in (\ref{777}), we can choose a sequence of rank one operators $\{r_{i}\}$ such that $\{r_{i}\}$ converges to identity in the strong operator topology. Hence, we have $$x_{L}=x_{R}.$$
 Let us denote $x:=x_{L}=x_{R}.$ Then
 \begin{equation}\label{555}
   aL(b)=axb=R(a)b
 \end{equation}
 for any $a,b\in \mathrm{Alg} \mathcal{L}\cap \mathcal{K}(\mathcal{H}).$

 In (\ref{555}), let $a=r_{i}$  and $\{r_{i}\}$ converges to identity in the strong operator topology, then $$L(b)=\lim\limits_{i} r_{i}L(b)=\lim\limits_{i} r_{i}xb=xb=L_{x}(b),$$ for any $b\in \mathrm{Alg} \mathcal{L}\cap \mathcal{K}(\mathcal{H})$

In (\ref{555}), let $b=r_{i}$ and $\{r_{i}\}$ converges to identity in the strong operator topology, then $$R(a)=\lim\limits_{i} R(a)r_{i}=\lim\limits_{i} axr_{i}=ax=R_{x}(a),$$ for any $a\in \mathrm{Alg} \mathcal{L}\cap \mathcal{K}(\mathcal{H}).$

The above shows that $\varphi(x)=(L, R).$ The proof is complete.\end{proof}

%
%

\begin{corollary}\label{CDCSL}
  Let $\mathcal{L}$ be a completely distributive CSL or a subspace lattice with two atoms on a complex separable Hilbert space $\mathcal{H}$. Then $\mathcal{M}(\mathrm{Alg} \mathcal{L}\cap \mathcal{K}(\mathcal{H}))$ and $\mathrm{Alg}\mathcal{L}$ are isomorphic.
\end{corollary}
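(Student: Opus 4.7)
The plan is to deduce this as an immediate consequence of Theorem~\ref{strongone}. The hypothesis of that theorem is the strong rank one density property, so the only thing to check is that this property is enjoyed by both classes of lattices appearing in the corollary. This verification is not new: as recalled in the introduction of the paper (citing \cite{ABSL}), every completely distributive commutative subspace lattice, as well as every subspace lattice with two atoms, has the strong rank one density property. Once this is acknowledged, there is nothing further to prove.

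Concretely, I would write the proof in two short steps. First, note that if $\mathcal{L}$ is a completely distributive CSL, then by the Laurie--Longstaff-type result quoted in \cite{ABSL}, the algebra generated by the rank one operators in $\mathrm{Alg}\mathcal{L}$ is strongly dense in $\mathrm{Alg}\mathcal{L}$; similarly, if $\mathcal{L}=\{0, M, N, \mathcal{H}\}$ is a subspace lattice with two atoms, the same density statement holds. Second, apply Theorem~\ref{strongone} to conclude that the map $\varphi:\mathrm{Alg}\mathcal{L}\to \mathcal{M}(\mathrm{Alg}\mathcal{L}\cap \mathcal{K}(\mathcal{H}))$ sending $a$ to $(L_a,R_a)$ is an isomorphism.

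There is no genuine obstacle in this corollary; it is a labelled specialization of Theorem~\ref{strongone} to the two concrete classes of lattices of interest in the paper, and is placed here mainly to serve as a convenient reference for the subsequent proof of Theorem~\ref{cdcslB}. If an obstacle were to arise, it would be in justifying the strong rank one density property for an individual lattice class, but both cases are already established in the literature and cited accordingly.
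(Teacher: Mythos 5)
Your proposal is correct and matches the paper's (implicit) argument: the corollary is stated without a separate proof precisely because it is the specialization of Theorem~\ref{strongone} to the two lattice classes, which are noted in the introduction (citing \cite{ABSL}) to have the strong rank one density property. Nothing further is needed.
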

%
%
%
%
%
%


Now, we recall another property: \textit{property $\mathbb{A}$}. An invertible element $u$ in a unital Banach algebra $\mathcal{A}$  is called \textit{doubly power-bounded} if
$\sup\limits_{k\in \mathbb{Z}}\|u^{k}\|<\infty.$
\begin{definition}
 A Banach algebra $\mathcal{A}$ has the \textit{property $\mathbb{A}$} if it is faithful and satisfies
  $$\mathcal{A}\subseteq \overline{\textrm{alg}\{u\in \mathcal{M}(\mathcal{A}):u \ \textrm{is doubly power-bounded} \}}^{s}.$$
  Here $\textrm{alg}S$ denotes the algebra generated by the set $S$, and $\overline{S}^{s}$ denotes the closure of $S$ with respect to the strict topology.
\end{definition}

It should be noted that if $I$ is the unit element of $\mathcal{M}(\mathcal{A})$, then $(I-2e)^{2}=I$ for any idempotent element  $e$ of $\mathcal{M}(\mathcal{A})$. Hence $I-2e$ is doubly power-bounded and $e=\frac{1}{2}I-\frac{1}{2}(I-2e)$.  It follows that every idempotent element belongs to  $\textrm{alg}\{u\in \mathcal{M}(\mathcal{A}):u \ \textrm{is doubly power-bounded}\}.$

If $\mathcal{A}$ is a Banach algebra having bounded approximate identity, then $\mathcal{A}$ has property $\mathbb{A}$ implies it is zero product determined due to Alaminos,  Bre$\check{s}$ar, Extremera and Villena (\cite[Theorem 5.20]{bresarbook}).

Now, we are in the position of verifying Theorem \ref{cdcslB}.

\begin{proof}[Proof of  Theorem \ref{cdcslB}]
  By Corollary \ref{CDCSL} and \cite[Corollary 1.15 ]{bresarbook},  Alg$\mathcal{L}$ is zero product
  determined if and only if  $\mathcal{M}(\textrm{Alg}\mathcal{L} \cap \mathcal{K}(\mathcal{H}))$ is. It remains  to show that $\mathcal{M}(\textrm{Alg} \mathcal{L}\cap \mathcal{K}(\mathcal{H}))$ is a zero product determined Banach algebra.

  By \cite[Lemmas 2.3, 2.10]{JK}, Alg$\mathcal{L}\cap \mathcal{K}(\mathcal{H})$ is the norm-closure of span of idempotents of Alg$\mathcal{N}$, and Alg$\mathcal{L}\cap \mathcal{K}(\mathcal{H})$ is dense in $\mathcal{M}(\textrm{Alg} \mathcal{L}\cap \mathcal{K}(\mathcal{H}))$ under the strict topology. Since the strict topology is weaker than the norm topology, $\mathcal{M}(\textrm{Alg} \mathcal{L}\cap \mathcal{K}(\mathcal{H}))$ is the strict-closure of span of idempotents.

  It follows that
 \begin{equation*}
    \mathcal{M}(\textrm{Alg}\mathcal{L}\cap \mathcal{K}(\mathcal{H}))\subseteq \overline{\textrm{alg}\{u\in \mathcal{M}(\textrm{Alg} \mathcal{L}\cap \mathcal{K}(\mathcal{H})): u \ \textrm{is doubly-power bounded}\}}^{\textit{s}}.
  \end{equation*}

  Hence $\mathcal{M}(\textrm{Alg} \mathcal{L}\cap \mathcal{K}(\mathcal{H}))$ has property $\mathbb{A}$. Therefore,  $\mathcal{M}(\textrm{Alg} \mathcal{L}\cap \mathcal{K}(\mathcal{H}))$ is zero product determined.
\end{proof}

The following corollary is a generalization of  Ghahramani's  results (\cite[Corollary2.8]{hogernestalgebra}).
\begin{corollary}
  Every nest algebra $\mathrm{Alg}\mathcal{N}$ on $\mathcal{H}$ is a zero product determined Banach algebra.
\end{corollary}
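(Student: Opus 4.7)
The plan is essentially to reduce this corollary to the preceding Theorem~\ref{cdcslB}. I would first recall the fact stated in the paragraph just before Theorem~\ref{cdcslB}, namely that every nest $\mathcal{N}$ on a Hilbert space $\mathcal{H}$ is a completely distributive commutative subspace lattice. This is a classical observation: a nest is totally ordered, hence any two of its elements commute (as projections), so $\mathcal{N}$ is commutative; and total order together with closure under arbitrary intersections and closed spans gives complete distributivity of the lattice operations.

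Once that identification is made, the corollary is immediate from Theorem~\ref{cdcslB}: since $\mathcal{N}$ is a completely distributive commutative subspace lattice, $\mathrm{Alg}\mathcal{N}$ falls into the first class of algebras treated there and is therefore zero product determined as a Banach algebra.

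There is essentially no obstacle here; the whole content of the corollary is the recognition that nests form a subclass of the lattices handled in Theorem~\ref{cdcslB}. If I wanted to expand the argument slightly for the reader's convenience, I might also point out that the strong rank one density property (which was the hypothesis actually used in Theorem~\ref{strongone} and Corollary~\ref{CDCSL}) is known to hold for nests, so the whole chain \textemdash\ identification of $\mathcal{M}(\mathrm{Alg}\mathcal{N}\cap\mathcal{K}(\mathcal{H}))$ with $\mathrm{Alg}\mathcal{N}$, density of spans of idempotents, property~$\mathbb{A}$, and hence zero product determinedness \textemdash\ goes through for $\mathcal{L}=\mathcal{N}$ without any additional work. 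No new estimates, constructions, or case analyses are needed beyond what has already been established.
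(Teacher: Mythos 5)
Your proposal is correct and matches the paper's (implicit) argument exactly: the paper states just before Theorem~\ref{cdcslB} that every nest is a completely distributive commutative subspace lattice, and the corollary is then an immediate specialization of that theorem. No further comment is needed.
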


\begin{remark}
  In \cite{erdos}, Erdos gives an abstract characterization of nest algebras and partially answers which Banach algebras are isometrically isomorphic to some nest algebra. Thus, those abstract nest algebras described in \cite{erdos} are zero product determined.
\end{remark}

As an application, we study the local $n$-cocycles, homomorphisms on completely distributive commutative subspace lattice algebras  and subspace lattice algebras with two atoms through Theorem \ref{cdcslB}.

Let $\mathcal{X}$ and $\mathcal{Y}$ be Banach spaces. For $n\in  \mathbb{N}$, let $\mathcal{X}^{(n)}$ be the Cartesian product of $n$ copies of $\mathcal{X}$, and let $B^{n}(\mathcal{X}, \mathcal{Y})$ be the space of bounded $n$-linear mappings from $\mathcal{X}^{(n)}$ into $\mathcal{Y}$.

  Let $\mathcal{A}$ be a Banach algebra, and  $\mathcal{M}$  a Banach $\mathcal{A}$-bimodule. For
$n\in  \mathbb{N}$ and $T\in B^{n}(\mathcal{A},\mathcal{M})$, define
\begin{eqnarray*}
  \delta^{n}T : (a_{1},\ldots , a_{n+1}) &\mapsto&  a_{1}T(a_{2},\ldots, a_{n+1}) \\
   &+& \sum\limits^{n}_{j=1}(-1)^{j}T(a_{1},\ldots , a_{j-1}, a_{j}a_{j+1},\ldots, a_{n+1}) \\
  &+& (-1)^{n+1}T(a_{1},\ldots, a_{n})a_{n+1}.
\end{eqnarray*}
The elements of ker $\delta^{n}$ are called the \textit{ bounded n-cocycles}. $T$ is a \textit{bounded local n-cocycle} if, for each $\tilde{a} = (a_{1},\ldots, a_{n})\in \mathcal{A}^{(n)}$, there is a bounded  $n$-cocycle $T_{\tilde{a}}$ from $\mathcal{A}^{(n)}$ into $\mathcal{M}$ such that $T (\tilde{a}) = T_{\tilde{a}}(\tilde{a})$.
In \cite{Hou}, Hou and Fu show that every local 3-cocycle of a von Neumann algebra  into an arbitrary unital dual bimodule is a 3-cocycle.

\begin{proposition}\label{9898}
  Let $\mathcal{L}$ be a completely distributive commutative subspace lattice or a subspace lattice with two atoms on a separable Hilbert space $\mathcal{H}$. Then, for any Banach $\mathrm{Alg}\mathcal{L}$-bimodule $\mathcal{M}$ and $n\in \mathbb{N}$, every bounded local $n$-cocycle $T$ from $(\mathrm{Alg}\mathcal{L})^{(n)}$ into $\mathcal{M}$ is an $n$-cocycle.
\end{proposition}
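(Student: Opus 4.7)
The plan is to combine Theorem \ref{cdcslB} with a general transfer principle from zero product determinedness to the rigidity of local $n$-cocycles. Since $\mathcal{L}$ is a subspace lattice we have $I \in \mathrm{Alg}\mathcal{L}$, so $\mathrm{Alg}\mathcal{L}$ is unital and in particular has a bounded approximate identity. By Theorem \ref{cdcslB}, $\mathrm{Alg}\mathcal{L}$ is zero product determined, so the equivalent identity $\varphi(xy,z) = \varphi(x, yz)$ from (\ref{eq3}) is available.

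First I would recall the $n=1$ case, which is already used in the proof of Theorem \ref{equivalent} via \cite[Theorem 2.5]{ncocycles}: every bounded local derivation from a zero product determined Banach algebra with bounded approximate identity into any Banach bimodule is a derivation. Applied to $\mathrm{Alg}\mathcal{L}$, this settles $n=1$.

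For the general $n$, I would invoke (or verify, following the same line of argument as \cite[Theorem 2.5]{ncocycles}) the $n$-cocycle analogue: if $\mathcal{A}$ is a zero product determined Banach algebra with bounded approximate identity, then every bounded local $n$-cocycle $T: \mathcal{A}^{(n)} \to \mathcal{M}$ is an $n$-cocycle. The mechanism is standard: fixing the first $n-1$ variables $a_1, \ldots, a_{n-1}$, the assignment $b \mapsto T(a_1, \ldots, a_{n-1}, b)$ behaves, up to lower-order correction terms built from $T$ evaluated on shorter tuples, like a local derivation into a Banach $\mathcal{A}$-bimodule constructed from $\mathcal{M}$. One then peels off the slots one at a time, and at each step the $n=1$ rigidity of local derivations (combined with zero product determinedness used to absorb the multilinear corrections) forces $\delta^n T = 0$.

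The main obstacle is the bookkeeping in this induction: the inductive hypothesis must be formulated strongly enough that the map in the last slot is genuinely a local derivation with respect to a well-defined bimodule action, and the boundedness of $T$ must be propagated through each reduction. Zero product determinedness of $\mathrm{Alg}\mathcal{L}$, supplied by Theorem \ref{cdcslB}, is exactly the algebraic input needed to justify moving a product across the bilinear/multilinear functionals that appear in the coboundary expression, so once the induction is set up the conclusion follows.
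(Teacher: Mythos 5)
Your proposal matches the paper's proof: both establish that $\mathrm{Alg}\mathcal{L}$ is zero product determined via Theorem \ref{cdcslB} and then apply \cite[Theorem 2.5]{ncocycles} to conclude that every bounded local $n$-cocycle is an $n$-cocycle. The inductive sketch you add is just an outline of what that cited theorem already provides, so the argument is essentially identical to the paper's.
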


\begin{proof}
  As stated in Theorem \ref{cdcslB}, Alg$\mathcal{L}$ is zero product determined.  Then the  result is obtained directly from \cite[Theorem 2.5]{ncocycles}.
\end{proof}
\begin{remark}
 In Proposition \ref{9898}, for $n=1$, it can be inferred that every bounded local derivation from Alg$\mathcal{L}$ into any Banach Alg$\mathcal{L}$-bimodule  $\mathcal{M}$ is a derivation. It is worth pointing out that the Alg$\mathcal{L}$-bimodules are not necessarily unital.
\end{remark}

A mapping $\varphi$ is called a \textit{weighted homomorphism} from an algebra $\mathcal{A}$ into an algebra $\mathcal{B}$, if there exists an invertible centralizer $W$ (i.e. $W(ab)=W(a)b=aW(b)$) on $\mathcal{B}$ and a homomorphism $\phi$ from $\mathcal{A}$ into $\mathcal{B}$ such that $$\varphi=W\phi.$$
By \cite[Theorem 7.4]{bresarbook} and Theorem \ref{cdcslB}, we obtain the following result.
\begin{proposition}\label{homo}
  Let $\mathcal{L}$ be a completely distributive commutative subspace lattice or a subspace lattice with two atoms on a
separable Hilbert space $\mathcal{H}$, and let $\mathcal{B}$ be a faithful Banach algebra satisfying $\mathcal{B}^{2}=\mathcal{B},$ where $\mathcal{B}^{2}$ denotes the  linear span of $\{b_{1}b_{2}: b_{1},b_{2}\in \mathcal{B}\}$. Then every surjective continuous zero product preserving linear mapping $\varphi : \mathrm{Alg} \mathcal{L}\rightarrow \mathcal{B}$ is a weighted homomorphism.
\end{proposition}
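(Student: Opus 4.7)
The plan is a direct reduction to \cite[Theorem 7.4]{bresarbook}. That result asserts that if $\mathcal{A}$ is a zero product determined Banach algebra (with appropriate mild non-degeneracy, e.g. a bounded approximate identity) and $\mathcal{B}$ is a faithful Banach algebra with $\mathcal{B}^{2}=\mathcal{B}$, then every surjective continuous zero product preserving linear mapping $\mathcal{A}\rightarrow \mathcal{B}$ is automatically of the form $W\phi$ for some invertible centralizer $W$ on $\mathcal{B}$ and some algebra homomorphism $\phi$. So the entire proof amounts to verifying that the pair $(\mathrm{Alg}\,\mathcal{L},\mathcal{B})$ and the map $\varphi$ satisfy these hypotheses.

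First, I would check the source side. By Theorem \ref{cdcslB}, $\mathrm{Alg}\,\mathcal{L}$ is a zero product determined Banach algebra for the two classes of lattices under consideration. Moreover, since $\mathcal{L}$ contains $\mathcal{H}$, the identity operator $I_{\mathcal{H}}$ lies in $\mathrm{Alg}\,\mathcal{L}$, so $\mathrm{Alg}\,\mathcal{L}$ is unital and thus trivially possesses a bounded approximate identity, which handles any auxiliary hypothesis needed by \cite[Theorem 7.4]{bresarbook}.

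Second, the target side is already exactly as required: $\mathcal{B}$ is faithful and $\mathcal{B}^{2}=\mathcal{B}$ by hypothesis, and $\varphi$ is surjective, continuous, and zero product preserving, again by hypothesis.

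Applying \cite[Theorem 7.4]{bresarbook} then immediately produces the desired decomposition $\varphi=W\phi$ with $W$ an invertible centralizer on $\mathcal{B}$ and $\phi:\mathrm{Alg}\,\mathcal{L}\rightarrow \mathcal{B}$ a homomorphism, i.e.\ $\varphi$ is a weighted homomorphism. There is no real obstacle here; the substance of the result is already contained in Theorem \ref{cdcslB}, and this proposition is essentially a packaging of that fact in the language of zero product preserving maps.
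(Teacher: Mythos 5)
Your proposal is correct and follows exactly the route the paper takes: the paper derives Proposition \ref{homo} directly from Theorem \ref{cdcslB} (which gives that $\mathrm{Alg}\,\mathcal{L}$ is zero product determined) together with \cite[Theorem 7.4]{bresarbook}. Your additional remark that $\mathrm{Alg}\,\mathcal{L}$ is unital, hence has a bounded approximate identity, is a sensible verification of the auxiliary hypothesis and does not change the argument.
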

 \begin{remark}

Let $\mathcal{L}$ be as in Proposition \ref{homo}, if $\varphi$ is a surjective continuous homomorphism on $\textrm{Alg} \mathcal{L}$, then we obtain a complete description of surjective continuous homomorphisms on Alg$\mathcal{L}$.
\end{remark}


\section{Zero Lie product determined algerbas}\label{zLpd}

Zero Lie product determined Banach algebras are discussed in \cite{zlpd}. In this section we shall investigate  the zero Lie product determined property in triangular UHF algebras, locally matrix algebras etc. with the primary fact that if  a Banch algebra is weakly amenable and zero product determined with a bounded approximate identity, then it is zero Lie product determined.

A Banach algebra $\mathcal{A}$ is  \textit{weakly amenable} if every continuous derivation $\delta$ from $\mathcal{A}$ into its dual space $\mathcal{\mathcal{A}^{*}}$ is inner.

 A (Banach) algebra $\mathcal{A}$ is \textit{zero Lie product determined} if, for every (continuous) bilinear functional $\phi$ on $\mathcal{A}$ satisfying $\phi(x, y)=0$ whenever $x$ and
$y$ commute, there is a linear functional $\tau$ on $[\mathcal{A},\mathcal{A}]$ such that $\phi(x, y) = \tau([x, y])$ for all $x, y \in \mathcal{A}$, where $[\mathcal{A},\mathcal{A}]$ denotes the linear span of all commutators of the (Banach) algebra $\mathcal{A}$.

Let us emphasize that the definition here of a zero Lie product determined Banach algebra in which $\tau$ is not necessarily continuous has a slight difference  compared with Bre{\v s}ar's book \cite{bresarbook}.

\subsection{Triangular UHF algebras}

Let $\{p_{n}\}$ be a sequence of positive integers such that $p_{m}\rvert p_{n}$, whenever $m\leq n$. For each $n$, let $\mathcal{T}_{p_{n}}$ be the algebra of all $p_{n}\times p_{n}$ upper triangular complex matrices. For $m\leq n$, define $\sigma_{ p_{n},p_{m}}$ be the following mapping from $\mathcal{T}_{p_{m}}$ into $\mathcal{T}_{p_{n}}$, $$\sigma_{ p_{n},p_{m}}(x)=I_{d}\otimes x, d=p_{n}/p_{m},$$ for all $x\in \mathcal{T}_{p_{m}}.$

Then a \textit{triangular UHF algebra} $\mathcal{A}$ is a Banach algebra which is isometrically isomorphic to the following Banach algebra inductive limit: $$\mathcal{A}=\lim\limits_{\longrightarrow}(\mathcal{T}_{p_{n}}; \sigma_{ p_{n},p_{m}}).$$  Let $\varphi_{n}$ be the canonical map from $\mathcal{T}_{p_{n}}$ into $\mathcal{A}$ and set $\mathcal{T}_{p_{n}}^{\prime}= \varphi_{n}(\mathcal{T}_{p_{n}}).$ Then $\bigcup\limits_{n}\mathcal{T}_{p_{n}}^{\prime}$ is dense in $\mathcal{A}$.



\begin{theorem}\label{TUHFzlpd}
  Let $\mathcal{A}$ be a triangular UHF algebra, then $\mathcal{A}$ is  a zero Lie product determined Banach algebra.
\end{theorem}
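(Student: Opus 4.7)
The plan is to apply the guiding principle stated at the opening of this section: a Banach algebra that is weakly amenable, zero product determined, and admits a bounded approximate identity is automatically zero Lie product determined. Because each $\mathcal{T}_{p_n}$ is unital and the connecting maps $\sigma_{p_n,p_m}(x)=I_d\otimes x$ send the identity to the identity, the inductive limit $\mathcal{A}$ is unital and hence trivially carries a bounded approximate identity. It therefore suffices to verify (a) that $\mathcal{A}$ is zero product determined, and (b) that $\mathcal{A}$ is weakly amenable.

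For (a) I would verify property $\mathbb{A}$ by realising $\mathcal{A}$ as a norm-closed linear span of its idempotents. Inside each $\mathcal{T}_{p_n}$ the diagonal matrix units $e_{ii}$ are idempotents, and for $i<j$ a direct computation using $e_{ii}e_{ij}=e_{ij}$, $e_{ij}e_{ii}=0$, and $e_{ij}^{2}=0$ shows that $e_{ii}+e_{ij}$ is also an idempotent, so the off-diagonal matrix unit $e_{ij}=(e_{ii}+e_{ij})-e_{ii}$ is a difference of idempotents. Applying the unital homomorphism $\varphi_n$, each subalgebra $\mathcal{T}_{p_n}'$ is the linear span of idempotents of $\mathcal{A}$, and since $\bigcup_n \mathcal{T}_{p_n}'$ is norm-dense in $\mathcal{A}$ the algebra $\mathcal{A}$ lies in the norm-closure of its idempotents. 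For any idempotent $e\in\mathcal{A}$ one has $(I-2e)^{2}=I$, so $I-2e$ is doubly power-bounded and $e=\tfrac{1}{2}I-\tfrac{1}{2}(I-2e)$ belongs to $\mathrm{alg}\{u\in\mathcal{M}(\mathcal{A}):u\text{ is doubly power-bounded}\}$. Since $\mathcal{A}$ is unital, the strict topology on $\mathcal{M}(\mathcal{A})=\mathcal{A}$ agrees with the norm topology, so this shows that $\mathcal{A}$ has property $\mathbb{A}$; by the theorem of Alaminos--Bre\v{s}ar--Extremera--Villena (\cite[Theorem~5.20]{bresarbook}, already invoked in the proof of Theorem~\ref{cdcslB}), $\mathcal{A}$ is zero product determined.

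The main obstacle is (b). I would invoke the known weak amenability of triangular UHF algebras; in case a self-contained argument is preferred, the natural route is to take a continuous derivation $D:\mathcal{A}\to\mathcal{A}^{*}$, restrict it to each finite-dimensional subalgebra $\mathcal{T}_{p_n}'$, exploit the explicit matrix-unit basis and the compatibility of the restrictions under the connecting maps to produce a compatible sequence of implementing functionals, and then pass to the limit to obtain a single $\xi\in\mathcal{A}^{*}$ whose associated inner derivation $\mathrm{ad}_{\xi}$ agrees with $D$ on the dense subalgebra $\bigcup_n\mathcal{T}_{p_n}'$, hence everywhere by continuity. Once (a) and (b) are in hand, combining them with the bounded approximate identity and the primary fact recalled at the start of the section concludes that $\mathcal{A}$ is zero Lie product determined.
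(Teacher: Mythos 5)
Your step (a) is correct and complete: each $\mathcal{T}_{p_n}'$ is linearly spanned by idempotents (via $e_{ij}=(e_{ii}+e_{ij})-e_{ii}$), their union is norm-dense in $\mathcal{A}$, and since $\mathcal{A}$ is unital the strict topology on $\mathcal{M}(\mathcal{A})=\mathcal{A}$ coincides with the norm topology, so property $\mathbb{A}$ and hence the zero product determined property follow. The genuine gap is step (b). Weak amenability of the triangular UHF algebra $\mathcal{A}$ is not something you can simply ``invoke'': what is actually available (and what this paper uses elsewhere, citing \cite{nestweakly}) is weak amenability of each finite-dimensional block $\mathcal{T}_{p_n}$, and weak amenability does not pass to norm-closures of increasing unions without additional input. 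Your sketched repair founders at precisely this point: restricting $D:\mathcal{A}\to\mathcal{A}^{*}$ to $\mathcal{T}_{p_n}'$ gives a derivation into $\mathcal{A}^{*}$ viewed as a $\mathcal{T}_{p_n}'$-bimodule, not into $(\mathcal{T}_{p_n}')^{*}$, and even after reducing to the finite-dimensional dual you obtain implementing functionals $\xi_n$ with no a priori uniform bound $\sup_n\|\xi_n\|<\infty$ and no canonical coherence under the connecting maps (implementing functionals are only determined modulo the commutant part of the module). Without a uniform bound there is no weak-$*$ cluster point to serve as a single $\xi\in\mathcal{A}^{*}$, and the cohomology constants of $\mathcal{T}_{p_n}$ (which, unlike $M_{p_n}(\mathbb{C})$, are not amenable for $p_n\geq 2$) are not obviously bounded in $n$. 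So the assertion on which your whole argument rests is unproved.

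It is worth noting how the paper sidesteps this. Its proof never establishes weak amenability of $\mathcal{A}$: it uses that each $\mathcal{T}_{p_n}$ is zero Lie product determined (as a finite nest algebra, by \cite{nestweakly}), restricts the given bilinear functional $\varphi$ to $\mathcal{T}_{p_n}\times\mathcal{T}_{p_n}$ to produce a nested sequence of linear functionals $\tau_n$, takes their union to get $\rho$ on the dense subalgebra $\bigcup_n\mathcal{T}_{p_n}'$, and extends to commutators of $\mathcal{A}$ using only the continuity of $\varphi$. This works because the paper's definition of zero Lie product determined (stated at the top of Section 4) does \emph{not} require the functional $\tau$ to be continuous, so no uniform norm control on the $\tau_n$ is needed. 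Your route, if completed, would prove the stronger conclusion with continuous $\tau$ via \cite{zlpd} and \cite[Theorem 5.20]{bresarbook}; but to complete it you must either supply a genuine proof (with reference or with uniform control of the weak-amenability constants of $\mathcal{T}_{p_n}$) that $\mathcal{A}$ is weakly amenable, or replace step (b) by the compatible-functionals argument above.
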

\begin{proof}
   Let $\mathcal{B}=\cup^{\infty}_{n=1}\mathcal{T}_{p_{n}}$ and $\mathcal{A}=\overline{\mathcal{B}}^{\|\cdot\|}$, where $\mathcal{T}_{p_{n}}$ is the algebra of all $p_{n}\times p_{n}$ upper triangular complex matrices and satisfies $\mathcal{T}_{p_{i}}\subset \mathcal{T}_{p_{i+1}}.$

For every continuous bilinear functional $\varphi$ on $\mathcal{A}$ satisfying
\begin{eqnarray}\label{equ1}
[x, y]=0\Rightarrow \varphi(x, y)=0,
\end{eqnarray}
for $x, y\in \mathcal{A},$ we shall show that there exists a linear functional $\rho$ on $\mathcal{B}$ such that $\varphi(x, y)=\rho([x,y])$ for all $x,y\in \mathcal{B}.$

Indeed, for every $n\in \mathbb{N},$ $\mathcal{T}_{p_{n}}$ is a finite nest algebra.  Hence, by  \cite[Theorem 3.1]{nestweakly}, $\mathcal{T}_{p_{n}}$ is a zero Lie product determined Banach algebra. Let $\varphi_{n}$ be the restriction of  $\varphi$ on $\mathcal{T}_{p_{n}}$,  $\varphi_{n}$ satisfies the Equation (\ref{equ1}) on $\mathcal{T}_{p_{n}}$, thus there exists a  linear functional $\tau_{n}$ on $\mathcal{T}_{p_{n}}$ such that $\varphi_{n}(x, y)=\tau_{n}([x,y])$ for all $x,y\in \mathcal{T}_{p_{n}}.$

Since $\mathcal{T}_{p_{1}}$ can be embeded into $\mathcal{T}_{p_{2}}$ and $\tau_{2}\rvert_{\mathcal{T}_{p_{1}}}([x,y]) = \tau_{1}([x,y]),$ for any $x,y\in \mathcal{T}_{p_{1}}$, we finally construct a sequence of  linear functionals $\{\tau_{n}\} _{n \in \mathbb{N}}$ such that ${\tau_{1}} \subseteq {\tau_{2}} \subseteq \cdots \subseteq {\tau_{n}}\subseteq\cdots$. It follows that there exists a  linear functional $\rho$ on $\mathcal{B}$ such that $\rho\rvert_{\mathcal{T}_{p_{n}}}= {\tau_{n}}$.
%
For every $x,y\in \mathcal{B},$ there is a $\mathcal{T}_{p_{n}}$ for some $n$ such that $x,y\in \mathcal{T}_{p_{n}}$, and $\varphi(x,y)={\tau_{n}}([x,y])=\rho([x,y]).$



It remains to show that $\mathcal{A}$ is zero Lie product determined. Since $\mathcal{A}=\overline{\mathcal{B}}^{\|\cdot\|}$,  for any $a\in \mathcal{A}, b\in \mathcal{B}$, define $\widetilde{\rho}([a,b])=\lim\limits_{n\rightarrow\infty}\varphi(a_{n},b)=\varphi(a,b)$ where $\{a_{n}\}\subseteq \mathcal{B}, $ and $\{a_{n}\}$ converges to $a$ in the norm topology.

For any $a,c\in \mathcal{A}$,  there exists a sequence $\{c_{n}\}\subseteq \mathcal{B},$ and $\{c_{n}\}$ converges to $c$ in the norm topology. Since $\lim\limits_{n\rightarrow\infty}\widetilde{\rho}([a,c_{n}])=\lim\limits_{n\rightarrow\infty}
\varphi(a,c_{n})=\varphi(a,c),$  the limit of $\widetilde{\rho}([a,c_{n}])$ exists and  is denoted by $\widetilde{\rho}([a,c]).$ Hence, we define a functional on $\mathcal{A}$ and it is easy to check that $\widetilde{\rho}$  is well defined, linear and  $\widetilde{\rho} \rvert _{\mathcal{B}} = \rho$. This is what we desire.
\end{proof}



%

%

\subsection{Locally matrix algebras}

 An associative algebra $\mathcal{A}$ is a \textit{locally matrix algebra} if for each finite subset of $\mathcal{A}$ there exists a subalgebra $\mathcal{B}\subset\mathcal{A}$ containing this subset such that $\mathcal{B}\cong M_{n}(\mathbb{C})$ for some $n$.

\begin{theorem}\label{localmatrixalgebras}
  Suppose $\mathcal{A}$ is a  countable dimensional locally matrix algebra having a non-trivial idempotent $e$, then $\mathcal{A}$ is a zero Lie product determined algebra.
\end{theorem}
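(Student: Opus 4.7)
My strategy is to mirror the proof of Theorem~\ref{TUHFzlpd}: realize $\mathcal{A}$ as the union of an ascending chain of full matrix subalgebras, use that every $M_{n}(\mathbb{C})$ is zero Lie product determined, and glue the resulting linear functionals by uniqueness. Because $\mathcal{A}$ is an abstract algebra rather than a Banach algebra, there is no continuity or approximation to worry about, so the argument is purely algebraic.

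The first step is to write $\mathcal{A} = \bigcup_{k=1}^{\infty} \mathcal{B}_{k}$ with $\mathcal{B}_{k} \subseteq \mathcal{B}_{k+1}$ and $\mathcal{B}_{k} \cong M_{n_{k}}(\mathbb{C})$. Since $\mathcal{A}$ is countable dimensional, I enumerate a basis $\{a_{i}\}_{i \geq 1}$ of $\mathcal{A}$, set $\mathcal{B}_{1}$ to be any matrix subalgebra containing $\{a_{1}, e\}$ (using the defining property of a locally matrix algebra applied to a finite set), and inductively choose $\mathcal{B}_{k+1}$ to be a matrix subalgebra containing a finite basis of $\mathcal{B}_{k}$ together with $a_{k+1}$. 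Then $\mathcal{B}_{k} \subseteq \mathcal{B}_{k+1}$ because the finite basis of $\mathcal{B}_{k}$ already generates $\mathcal{B}_{k}$ as an algebra, and $\bigcup_{k} \mathcal{B}_{k} = \mathcal{A}$ because $a_{k} \in \mathcal{B}_{k}$ for every $k$. The non-trivial idempotent $e$ ensures that the tower is not the degenerate one-dimensional one; in particular, $n_{k} \geq 2$ for all sufficiently large $k$.

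For each $n$, $M_{n}(\mathbb{C})$ is zero Lie product determined: as a finite-dimensional $C^{*}$-algebra it is unital, zero product determined, and weakly amenable, so the fact recalled at the beginning of this section applies (the $n = 1$ case is vacuous). Given a bilinear functional $\varphi : \mathcal{A} \times \mathcal{A} \rightarrow \mathbb{C}$ vanishing on commuting pairs, the restriction $\varphi_{k} := \varphi|_{\mathcal{B}_{k} \times \mathcal{B}_{k}}$ shares this property, so there is a linear functional $\tau_{k}$ on $[\mathcal{B}_{k}, \mathcal{B}_{k}]$ with $\tau_{k}([x,y]) = \varphi(x,y)$ for every $x, y \in \mathcal{B}_{k}$. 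The value of $\tau_{k}$ on a commutator is forced by $\varphi$, hence $\tau_{k}$ is uniquely determined on $[\mathcal{B}_{k}, \mathcal{B}_{k}]$ and consequently $\tau_{k+1}|_{[\mathcal{B}_{k}, \mathcal{B}_{k}]} = \tau_{k}$. Since any $x, y \in \mathcal{A}$ sit in a common $\mathcal{B}_{k}$, we have $[\mathcal{A}, \mathcal{A}] = \bigcup_{k} [\mathcal{B}_{k}, \mathcal{B}_{k}]$, and the compatible family glues to a well-defined linear functional $\tau$ on $[\mathcal{A},\mathcal{A}]$ with $\varphi(x,y) = \tau([x,y])$ for all $x, y \in \mathcal{A}$.

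The only step I expect to need careful verification is the construction of the ascending tower with the containment $\mathcal{B}_{k} \subseteq \mathcal{B}_{k+1}$, since a priori the matrix subalgebra supplied by the locally matrix hypothesis could fail to absorb all of the previous $\mathcal{B}_{k}$; this is why one feeds in a finite spanning set of $\mathcal{B}_{k}$ rather than a single element. Once the tower is built, the uniqueness of each $\tau_{k}$ makes the compatibility relation automatic, so the gluing is immediate. In contrast to Theorem~\ref{TUHFzlpd}, no density/continuity argument is needed at the end because $\mathcal{A}$ is treated as a pure algebra.
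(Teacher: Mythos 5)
Your proof is correct, and its core is the same as the paper's: exhaust $\mathcal{A}$ by an ascending chain of full matrix subalgebras, use that each $M_{n}(\mathbb{C})$ is zero Lie product determined (which in the purely algebraic setting is Lemma~\ref{abelianzlpd} with $\mathcal{B}=\mathbb{C}$), and glue the functionals $\tau_{k}$, whose compatibility is automatic because each is forced by $\varphi$ on the spanning set of commutators. Where you differ is in how the chain is produced: the paper first treats the unital case via K\"othe's theorem \cite{kothe}, writing $\mathcal{A}\cong\otimes_{i=1}^{\infty}\mathcal{A}_{i}$ and taking the finite partial tensor products $\mathcal{A}_{1}\otimes\cdots\otimes\mathcal{A}_{n}\cong M_{n_{1}\cdots n_{n}}(\mathbb{C})$ as the chain, and then reduces the non-unital case to the unital one by exhausting $\mathcal{A}$ with corners $e_{i}\mathcal{A}e_{i}$ built from the given idempotent $e$. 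You instead build the nested chain directly from the definition of a locally matrix algebra and a countable basis (feeding a finite spanning set of $\mathcal{B}_{k}$ into the next step), which is more elementary, avoids the structure theorem entirely, and handles the unital and non-unital cases uniformly; the price is nothing, since the local-matrix hypothesis hands you exactly the subalgebras you need, and your construction makes the idempotent hypothesis essentially superfluous rather than the engine of the non-unital case. Both routes then conclude exactly as in Theorem~\ref{TUHFzlpd}, minus the density and continuity considerations, as you correctly observe.
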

\begin{proof}
  Suppose at the first that $\mathcal{A}$ is unital, by the Köthe’s Theorem \cite{kothe}, we
can assume that algebra $\mathcal{A}\cong \otimes^{\infty}_{i=1}\mathcal{A}_{i}$, where $\mathcal{A}_{i}$ is finite dimensional matrix algebra and $\otimes^{\infty}_{i=1}\mathcal{A}_{i}$ denote the algebraic tensor product of $\{\mathcal{A}_{i}\}^{\infty}_{i=1}.$

  For any element $a_{1}\in \mathcal{A}_{1}$, we can represent $a_{1}$ by
$$a_{1}\otimes \mathbb{C}1 \otimes \mathbb{C}1 \cdots  \in \mathcal{A}_{1}\otimes \mathcal{A}_{2} \otimes \mathcal{A}_{3} \cdots.$$

It follows that $\mathcal{A}_{i}$ can be embeded  into $\mathcal{A}_{1}\otimes \mathcal{A}_{2}\otimes\cdots \otimes\mathcal{A}_{n},$ $n$ may be infinite. Hence, we have
$$\mathcal{A}=\bigcup\limits_{n\geq1}(\mathcal{A}_{1}\otimes \mathcal{A}_{2}\otimes\cdots \otimes \mathcal{A}_{n}).$$

 It is apparent that $\mathcal{A}_{i}\otimes \mathcal{A}_{j}\cong M_{n_{i}}(\mathbb{C})\otimes  M_{n_{j}}(\mathbb{C})\cong M_{n_{i}n_{j}\times n_{i}n_{j}}(\mathbb{C}).$ Hence, $\mathcal{A}_{i}\otimes \mathcal{A}_{j}$ is a zero Lie product determined algebra. By a similar argument of Theorem \ref{TUHFzlpd}, we conclude that $\mathcal{A}$ is a zero Lie product determined algebra.

Suppose now that $\mathcal{A}$ is not a  unital locally matrix algebra, there exists a sequence of idempotents $e_{1}=e, e_{2}, ...$ such that $$e_{i}\mathcal{A}e_{i}\subset e_{i+1}\mathcal{A}e_{i+1}, i\geq1, \ \mathrm{and} \ \bigcup\limits_{i\geq1}e_{i}\mathcal{A}e_{i} = \mathcal{A}.$$ Since every $e_{i}\mathcal{A}e_{i}$ is a  countable dimensional unital locally matrix algebra, it is zero Lie product determined. Thus, we can also prove that $\mathcal{A}$ is zero Lie product determined by a similar technique.
\end{proof}

%
%
%

\subsection{$\mathcal{J}$-subspace lattice algebras and algebras of measurable operators}

Followed by \cite{jslintro} in which the class of $\mathcal{J}$-subspace lattices was introduced, Longstaff et al. began to discuss this class in \cite{longstaff,oreste} and subsequently attracted by other researchers. It follows from these references that both pentagon subspace lattices and atomic Boolean subspace lattices are $\mathcal{J}$-subspace lattices. In the next main result, Theorem \ref{jsl}, we shall prove  that the algebra of all finite rank operators of a $\mathcal{J}$-subspace lattice algebra is actually zero Lie product determined.

 Given a subspace lattice $\mathcal{L}$ on a Hilbert space $\mathcal{H}$, put
  $$\mathcal{J}(\mathcal{L}) = \{K\in \mathcal{L} : K\neq \{0\}\ \textrm{and} \ K_{-}\neq\mathcal{H}\},$$ where $K_{-}= \bigvee \{L\in\mathcal{L} : L\supsetneq K\}$.

   A subspace lattice $\mathcal{L}$ on a Hilbert space $\mathcal{H}$ is said to be a  \textit{ $\mathcal{J}$-subspace lattice}, if

  \begin{itemize}
    \item[(1)] $\bigvee\{K : K\in \mathcal{J} (\mathcal{L})\} = \mathcal{H};$
    \item[(2)]  $\bigwedge\{K_{-}: K\in \mathcal{J} (\mathcal{L})\} = {0};$
    \item[(3)] $K\bigvee K_{-}= \mathcal{H}$ for every $K\in \mathcal{J} (\mathcal{L});$
    \item[(4)]  $K\bigwedge K_{-} = {0}$ for every $K\in \mathcal{J} (\mathcal{L})$.
  \end{itemize}

For $K\in \mathcal{J} (\mathcal{L})$, we write $\mathcal{F}_{1}(K) = \{x\otimes f : 0 \neq x\in K, 0\neq f\in K_{-}
^{\perp}$\}, where $K_{-}^{\perp}$ denotes the annihilator of
$K_{-}$. Let $\mathcal{F}(K)$ denote the linear span of $\mathcal{F}_{1}(K)$.

\begin{lemma}\label{fkzlpd}
  $\mathcal{F}(K)$ is  a zero Lie product determined algebra, where $K\in \mathcal{J} (\mathcal{L})$.
\end{lemma}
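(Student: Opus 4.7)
The plan is to exhibit $\mathcal{F}(K)$ as a directed union of finite-dimensional subalgebras, each isomorphic to a full matrix algebra $M_m(\mathbb{C})$, and then mimic the gluing argument from the proof of Theorem \ref{TUHFzlpd}. Note that for zero Lie product determined algebras in the sense of this paper the linear functional $\tau$ is not required to be continuous, so the argument may proceed purely algebraically.

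First, given any finite collection $\{a_1,\dots,a_N\}\subset\mathcal{F}(K)$, I would expand each $a_i$ as a finite sum $\sum_j x_{ij}\otimes f_{ij}$ with $x_{ij}\in K$ and $f_{ij}\in K_-^{\perp}$, and take $V_0\subset K$, $W_0\subset K_-^{\perp}$ to be the (finite-dimensional) linear spans of all the $x_{ij}$ and $f_{ij}$. Then every $a_i$ lies in $V_0\otimes W_0$, which is already closed under the multiplication $(x\otimes f)(y\otimes g)=f(y)\,x\otimes g$. The axioms $K\wedge K_-=\{0\}$ and $K\vee K_-=\mathcal{H}$ force the canonical pairing $K\times K_-^{\perp}\to\mathbb{C}$, $(x,f)\mapsto f(x)$, to be nondegenerate on both sides (if $f(x)=0$ for all $f\in K_-^{\perp}$ then $x\in(K_-^{\perp})^{\perp}=K_-$, hence $x=0$, and dually). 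Using this I would alternately adjoin suitable dual elements to enlarge $V_0,W_0$ to finite-dimensional subspaces $V\subset K$, $W\subset K_-^{\perp}$ of equal dimension $m$ whose restricted pairing is nondegenerate. Picking dual bases $x_1,\dots,x_m$ of $V$ and $f_1,\dots,f_m$ of $W$ with $f_j(x_i)=\delta_{ij}$, the elements $e_{ij}=x_i\otimes f_j$ satisfy $e_{ij}e_{kl}=\delta_{jk}\,e_{il}$, so $\mathcal{A}_{V,W}:=V\otimes W\cong M_m(\mathbb{C})$.

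Second, since $M_m(\mathbb{C})$ is known to be zero Lie product determined (e.g.\ by \cite[Theorem 3.1]{nestweakly} applied to the trivial nest), restricting a given bilinear functional $\varphi$ on $\mathcal{F}(K)$ vanishing on commuting pairs to each $\mathcal{A}_{V,W}$ produces a linear functional $\tau_{V,W}$ on $[\mathcal{A}_{V,W},\mathcal{A}_{V,W}]$ satisfying $\varphi(a,b)=\tau_{V,W}([a,b])$ for all $a,b\in\mathcal{A}_{V,W}$. Such a $\tau_{V,W}$ is uniquely determined on the commutator subspace, which immediately gives compatibility: whenever $V\subset V'$ and $W\subset W'$, the restriction of $\tau_{V',W'}$ to $[\mathcal{A}_{V,W},\mathcal{A}_{V,W}]$ must coincide with $\tau_{V,W}$, because both represent the same values of $\varphi$ on commutators. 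Since every element of $[\mathcal{F}(K),\mathcal{F}(K)]$ already lives in some $[\mathcal{A}_{V,W},\mathcal{A}_{V,W}]$, the system $\{\tau_{V,W}\}$ pastes together into a well-defined linear functional $\tau$ on $[\mathcal{F}(K),\mathcal{F}(K)]$, and the identity $\varphi(a,b)=\tau([a,b])$ then holds on all of $\mathcal{F}(K)\times\mathcal{F}(K)$, establishing the lemma.

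The main obstacle I anticipate is the first step: arranging that $V$ and $W$ can be enlarged so that their dimensions match \emph{and} the restricted pairing $V\times W\to\mathbb{C}$ is nondegenerate, so that $\mathcal{A}_{V,W}$ is genuinely isomorphic to a full matrix algebra rather than a degenerate proper subalgebra of one. Once this structural point is handled via the global nondegeneracy of the $K$--$K_-^{\perp}$ pairing, the remainder is a direct-limit pasting of the finite-dimensional case, precisely parallel to the argument already used in Theorem \ref{TUHFzlpd}.
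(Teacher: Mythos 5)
Your proof is correct, and it takes a more self-contained route than the paper, which disposes of Lemma \ref{fkzlpd} in two lines by quoting \cite[Lemma 3.6]{LULI} for the fact that $\mathcal{F}(K)$ is a locally matrix algebra and then invoking Theorem \ref{localmatrixalgebras}. You instead verify the locally matrix structure by hand: the nondegeneracy of the pairing $K\times K_{-}^{\perp}\to\mathbb{C}$ does follow from $K\wedge K_{-}=\{0\}$ and $K\vee K_{-}=\mathcal{H}$ exactly as you say, and the enlargement step you flag as the main obstacle is unproblematic (first adjoin elements of $K_{-}^{\perp}$ to kill the left kernel of the pairing on $V_{0}\times W_{0}$, then use that $K$ surjects onto $W_{1}^{*}$ --- a consequence of right nondegeneracy together with $\dim W_{1}<\infty$ --- to enlarge $V_{0}$ to a $V$ mapping isomorphically onto $W_{1}^{*}$); this produces the directed family $\mathcal{A}_{V,W}\cong M_{m}(\mathbb{C})$, and your pasting argument, with the correct observation that $\tau_{V,W}$ is uniquely determined on the span of commutators, finishes the proof exactly as in Theorem \ref{TUHFzlpd}. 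Your version actually buys something: Theorem \ref{localmatrixalgebras} is stated for \emph{countable dimensional} locally matrix algebras (its proof rests on K\"othe's tensor decomposition \cite{kothe}), whereas $\mathcal{F}(K)$ has uncountable Hamel dimension as soon as $K$ is infinite dimensional, so the paper's citation does not literally cover the general case; your direct-limit argument needs no countability, only directedness of the family $\{\mathcal{A}_{V,W}\}$, and hence closes that gap. (For the base case you may equally well cite Lemma \ref{abelianzlpd} with $\mathcal{B}=\mathbb{C}$ instead of \cite[Theorem 3.1]{nestweakly}.)
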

\begin{proof}
  By \cite[Lemma 3.6]{LULI}, $\mathcal{F}(K)$ is a locally matrix algebra. Due to Theorem \ref{localmatrixalgebras}, $\mathcal{F}(K)$ is  zero Lie product determined.
\end{proof}

\begin{theorem}\label{jsl}
  Suppose $\mathcal{L}$ is a $\mathcal{J}$-subspace lattice, then $\mathrm{Alg} \mathcal{L}\cap \mathcal{F}(\mathcal{H})$ is  a zero Lie product determined algebra.
\end{theorem}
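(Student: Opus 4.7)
The plan is to mimic the exhaustion strategy of Theorem \ref{TUHFzlpd}, replacing the nested chain $\mathcal{T}_{p_n}$ by the family $\{\mathcal{F}(K) : K \in \mathcal{J}(\mathcal{L})\}$ and invoking Lemma \ref{fkzlpd} as the local building block. First I would quote the standard Longstaff characterization (already used in the proof of Lemma \ref{fkzlpd}): every finite rank operator of $\mathrm{Alg}\mathcal{L}$ is a finite linear combination of rank-one operators $x \otimes f$ with $x \in K$ and $f \in K_{-}^{\perp}$ for some $K \in \mathcal{J}(\mathcal{L})$, so that
$$\mathcal{A} \;:=\; \mathrm{Alg}\mathcal{L} \cap \mathcal{F}(\mathcal{H}) \;=\; \mathrm{span}\bigcup_{K \in \mathcal{J}(\mathcal{L})} \mathcal{F}(K).$$

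Now let $\varphi : \mathcal{A} \times \mathcal{A} \to \mathbb{C}$ be a bilinear functional vanishing on commuting pairs. For each $K \in \mathcal{J}(\mathcal{L})$, the restriction $\varphi|_{\mathcal{F}(K) \times \mathcal{F}(K)}$ still vanishes on commuting pairs, so by Lemma \ref{fkzlpd} there exists a linear functional $\tau_K$ on $[\mathcal{F}(K), \mathcal{F}(K)]$ with $\varphi(u,v) = \tau_K([u,v])$ for all $u, v \in \mathcal{F}(K)$. The crux is to reconcile these local $\tau_K$'s into one global $\tau$. I would carry out the reconciliation on rank-one generators: given $x \otimes f \in \mathcal{F}_1(K)$ and $y \otimes g \in \mathcal{F}_1(L)$ with $K \neq L$, one has
$$[x \otimes f,\, y \otimes g] \;=\; f(y)(x \otimes g) \;-\; g(x)(y \otimes f),$$
and the lattice axioms $K \wedge K_{-} = 0$, $K \vee K_{-} = \mathcal{H}$ (and the analogous axioms for $L$) control in which $\mathcal{F}(K')$ each summand on the right lies; when both scalar pairings $f(y)$ and $g(x)$ vanish the two rank-ones commute and $\varphi(x\otimes f, y\otimes g) = 0$, while in the remaining cases the value of $\varphi$ is pinned down by the $\tau_{K'}$'s on the single-component pieces.

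Having established consistency on generators, I would define $\tau$ on $[\mathcal{A}, \mathcal{A}]$ by $\tau([u,v]) := \varphi(u,v)$ and verify well-definedness by an induction on the cardinality of the finite set $S \subset \mathcal{J}(\mathcal{L})$ needed to express $u, v$ inside $\mathcal{A}_S := \sum_{K \in S} \mathcal{F}(K)$; the base case $|S| = 1$ is Lemma \ref{fkzlpd}, and the inductive step is precisely the cross-pair analysis above. The global $\tau$ is then the direct limit of the $\tau$'s built on each $\mathcal{A}_S$, paralleling the limiting step in the proof of Theorem \ref{TUHFzlpd}.

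The main obstacle I anticipate is exactly the cross-pair analysis: unlike the triangular UHF setting, the pieces $\mathcal{F}(K)$ do not form an increasing chain of subalgebras and $\mathcal{F}(K) \cdot \mathcal{F}(L)$ need not be contained in either factor, so compatibility of the $\tau_K$'s is a genuine check that must be extracted from the rank-one commutator formula above together with the transversality conditions that define a $\mathcal{J}$-subspace lattice.
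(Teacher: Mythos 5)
Your overall architecture --- decompose $\mathrm{Alg}\mathcal{L}\cap\mathcal{F}(\mathcal{H})$ into the pieces $\mathcal{F}(K)$, apply Lemma \ref{fkzlpd} on each piece, and patch the local functionals together --- is the same as the paper's, but the step you yourself flag as the ``main obstacle'' is exactly where your argument has a gap, and the paper closes it with a structural fact you do not invoke. By \cite[Proposition 1.1(3)(4)]{jsllu}, for distinct $K,L\in\mathcal{J}(\mathcal{L})$ one has $\mathcal{F}_{1}(K)\mathcal{F}_{1}(L)=\{0\}$; concretely, if $x\otimes f\in\mathcal{F}_{1}(K)$ and $y\otimes g\in\mathcal{F}_{1}(L)$ then $(x\otimes f)(y\otimes g)=f(y)\,x\otimes g=0$ and $(y\otimes g)(x\otimes f)=g(x)\,y\otimes f=0$, so the scalar pairings $f(y)$ and $g(x)$ \emph{always} vanish when $K\neq L$. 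Hence, writing $\mathcal{A}$ for $\mathrm{Alg}\mathcal{L}\cap\mathcal{F}(\mathcal{H})$, the decomposition $\mathcal{A}=\sum_{K\in\mathcal{J}(\mathcal{L})}\oplus\,\mathcal{F}(K)$ is a direct sum of mutually annihilating ideals: every cross pair commutes, so $\varphi$ vanishes on it; $[u,v]=\sum_{K}[u_{K},v_{K}]$ componentwise for the finite component decompositions $u=\sum_{K}u_{K}$, $v=\sum_{K}v_{K}$; and $\tau:=\bigoplus_{K}\tau_{K}$ is well defined on $[\mathcal{A},\mathcal{A}]=\bigoplus_{K}[\mathcal{F}(K),\mathcal{F}(K)]$ because that sum is direct. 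There is no induction on the size of the supporting set $S$ and no genuine compatibility check to perform; this is why the setting is easier, not harder, than the nested chain of Theorem \ref{TUHFzlpd}.

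The gap in your version is the branch of your case analysis in which $f(y)$ or $g(x)$ is nonzero. You assert that there ``the value of $\varphi$ is pinned down by the $\tau_{K'}$'s on the single-component pieces,'' but this is not an argument: the summands $f(y)(x\otimes g)$ and $g(x)(y\otimes f)$ mix a vector from $K$ with a functional from $L_{-}^{\perp}$ and need not lie in any single $\mathcal{F}(K')$, and knowing $\varphi$ on each $\mathcal{F}(K')\times\mathcal{F}(K')$ says nothing a priori about the value $\varphi(x\otimes f,\,y\otimes g)$ on a genuinely cross pair. As it happens this branch is vacuous, but your proof does not establish that, and without the annihilation property $\mathcal{F}(K)\mathcal{F}(L)=\{0\}$ the reconciliation you describe would not go through.
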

\begin{proof}
   Apparently, $\mathrm{Alg} \mathcal{L}\cap \mathcal{F}(\mathcal{H})= \textrm{span}\{\mathcal{F}(K):K\in \mathcal{J}(\mathcal{L})\}.$ Thanks to \cite[Proposition 1.1(3)(4)]{jsllu}, for $K, L \in \mathcal{J} (\mathcal{L}), \mathcal{F}_{1}(K)\cap \mathcal{F}_{1}(L) = \emptyset $ if and only if $K\neq L$, and $\mathcal{F}_{1}(K)\mathcal{F}_{1}(L) = \{0\} $ if and only if $K\neq L.$ Hence, $\mathcal{F}(K)\cap \mathcal{F}(L) = \emptyset $ if and only if $K\neq L$, $\mathcal{F}(K)\mathcal{F}(L) = \{0\} $ if and only if $K\neq L,$ then we have
$$\mathrm{Alg} \mathcal{L}\cap \mathcal{F}(\mathcal{H})= {\textstyle\sum\limits_ { {\scriptscriptstyle K\in \mathcal{J}(\mathcal{L})}}}\oplus\mathcal{F}(K).$$ By Proposition \ref{fkzlpd}, $\mathrm{Alg} \mathcal{L}\cap \mathcal{F}(\mathcal{H})$ is zero Lie product determined.
\end{proof}

Let $\mathcal{M}$ be a von Neumann algebra, the
set $S(\mathcal{M})$ of all  measurable operators affiliated with $\mathcal{M}$ is a unital $*$-algebra when equipped with the algebraic operations of strong addition and multiplication and taking the adjoint of an operator. More details about this can be found in \cite{chilin}.

In the following proposition, we obtain that $S(\mathcal{M})$ is zero Lie product determined when $\mathcal{M}$ is a von Neumann algebra of Type $I_{n}.$

\begin{lemma}\cite[Corollary 3.11]{bresarbook}\label{abelianzlpd}
  If $\mathcal{B}$ is an abelian unital algebra, then $M_{n}(\mathcal{B})$ is a zero Lie product determined algebra.
\end{lemma}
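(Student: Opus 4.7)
The plan is to exploit the explicit matrix-unit structure of $M_n(\mathcal{B})$ together with the commutativity of $\mathcal{B}$. Writing the standard matrix units as $\{E_{ij}\}_{i,j=1}^{n}$ and using commutativity, one has
\[
[bE_{ij}, cE_{kl}] = bc\,\bigl(\delta_{jk}E_{il} - \delta_{li}E_{kj}\bigr)
\]
for all $b,c\in\mathcal{B}$. This identity yields two families of commuting pairs: any two diagonal elements $bE_{ii}$, $cE_{jj}$ commute, and any two $\mathcal{B}$-multiples of a single matrix unit $E_{ij}$ commute (their products both vanish when $i\neq j$, and coincide when $i=j$ by $bc=cb$). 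Consequently, for a bilinear functional $\varphi:M_n(\mathcal{B})\times M_n(\mathcal{B})\to\mathbb{C}$ vanishing on commuting pairs, one obtains the vanishing relations $\varphi(bE_{ii}, cE_{jj})=0$ and $\varphi(bE_{ij}, cE_{ij})=0$ for all $b,c\in\mathcal{B}$ and all indices $i,j$.

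The next step is to identify the commutator subspace $L:=[M_n(\mathcal{B}), M_n(\mathcal{B})]$. Using the standard identities $[E_{ii}, bE_{ij}]=bE_{ij}$ for $i\neq j$ and $[E_{ij}, E_{ji}]=E_{ii}-E_{jj}$, one sees that
\[
L = \Bigl(\bigoplus_{i\neq j}\mathcal{B}E_{ij}\Bigr)\;\oplus\;\Bigl\{\textstyle\sum_i b_i E_{ii}:\sum_i b_i=0\Bigr\},
\]
where the trace is taken with values in $\mathcal{B}$. I would then define $\tau:L\to\mathbb{C}$ by
\[
\tau(bE_{ij}) := \varphi(E_{ii}, bE_{ij}) \quad (i\neq j),
\]
\[
\tau\bigl(b(E_{ii}-E_{i+1,i+1})\bigr) := \varphi(bE_{i,i+1}, E_{i+1,i}) \quad (1\leq i\leq n-1),
\]
and extend linearly. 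The target identity $\varphi(x,y)=\tau([x,y])$ then reduces by bilinearity to checking $\varphi(bE_{ij}, cE_{kl})=\tau([bE_{ij}, cE_{kl}])$ on every matrix-unit pair, which for pairs satisfying $j\neq k$ and $l\neq i$ is handled by the vanishing relations above, and in the remaining cases by direct expansion using the commutator identity together with commutativity of $\mathcal{B}$.

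The main obstacle I anticipate is the combinatorial bookkeeping across all index patterns---in particular, showing that $\varphi(bE_{ij}, cE_{jk})$ (with $i,j,k$ distinct) is independent of the intermediate index $j$, since this quantity must equal $\tau(bc\,E_{ik})$. This independence is expected to follow by applying the vanishing condition to carefully chosen commuting perturbations, for example differences of the form $[bE_{ij}, cE_{jk}] - [bE_{ij'}, cE_{j'k}]$ which must be rewritten as commutators of commuting elements. Equivalently, one may invoke Bre\v{s}ar's general criterion, which abstracts precisely this matrix-unit/commutative-coefficient-ring structure---this is exactly the content of Corollary 3.11 in the cited monograph, and yields the conclusion at once.
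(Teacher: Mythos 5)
The paper gives no proof of this lemma at all --- it is imported verbatim as Corollary 3.11 of Bre\v{s}ar's monograph \cite{bresarbook} --- and your closing appeal to that corollary is exactly the paper's own justification, so the proposal agrees with the paper. Your supplementary direct sketch is a sound outline (the commutator subspace $[M_{n}(\mathcal{B}),M_{n}(\mathcal{B})]$ is correctly identified as the $\mathcal{B}$-valued trace-zero matrices, and the commuting-perturbation device you name does settle the remaining index patterns, e.g.\ for distinct $i,j,l$ the identity $[bE_{ij}+E_{ii},\,cE_{jl}-bcE_{il}]=0$ together with the vanishing of $\varphi$ on the two commuting cross-pairs yields $\varphi(bE_{ij},cE_{jl})=\varphi(E_{ii},bcE_{il})$, while idempotents such as $E_{jj}-cE_{ij}$ handle the $n=2$ case where no third index is available), but as written these verifications are only announced rather than carried out, so the sketch would not stand on its own without the citation.
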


\begin{proposition}
  If $\mathcal{M}$ is a von Neumann algebra of Type $I_{n}$, then $S(\mathcal{M})$ is a zero Lie product determined algebra.
\end{proposition}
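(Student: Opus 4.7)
The plan is to reduce the statement to Lemma \ref{abelianzlpd} via the standard structure theory for Type $I_{n}$ von Neumann algebras together with the corresponding tensor-product behavior of the algebra of measurable operators. Concretely, if $\mathcal{Z}$ denotes the centre of $\mathcal{M}$, then $\mathcal{Z}$ is an abelian von Neumann algebra and the Type $I_{n}$ classification gives an isomorphism $\mathcal{M}\cong M_{n}(\mathcal{Z})$ of von Neumann algebras. I will use this isomorphism throughout.

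The first step is to identify $S(\mathcal{M})$. Using $\mathcal{M}\cong M_{n}(\mathcal{Z})$ together with the well-known fact (see \cite{chilin}) that $S$ behaves well under passage to matrix amplifications, I obtain a $*$-algebra isomorphism
\[
S(\mathcal{M})\;\cong\;S\bigl(M_{n}(\mathcal{Z})\bigr)\;\cong\;M_{n}\bigl(S(\mathcal{Z})\bigr).
\]
The middle identification is what I expect to be the most delicate point, but it is a completely standard result in the theory: strong sums and products of unbounded measurable operators, once the matrix entries are jointly measurable and affiliated to $\mathcal{Z}$, reassemble into $n\times n$ matrices with entries in $S(\mathcal{Z})$, and conversely any operator in $S(M_{n}(\mathcal{Z}))$ decomposes entrywise. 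I will therefore quote this isomorphism from the reference already cited in the paper rather than re-prove it.

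The second step is to observe that $S(\mathcal{Z})$ is an abelian unital algebra. Unitality is clear since $\mathcal{Z}$ is unital and its unit remains the identity of $S(\mathcal{Z})$. Commutativity follows because the strong product of two measurable operators affiliated to an abelian von Neumann algebra is again commutative; this is again a standard consequence of the construction of $S(\cdot)$ on commutative $\mathcal{Z}$, where $S(\mathcal{Z})$ may in fact be realised as an algebra of (equivalence classes of) measurable functions.

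Finally, with $\mathcal{B}:=S(\mathcal{Z})$ an abelian unital algebra, Lemma \ref{abelianzlpd} yields immediately that $M_{n}(\mathcal{B})$ is a zero Lie product determined algebra. Transporting this along the isomorphism $S(\mathcal{M})\cong M_{n}(S(\mathcal{Z}))$ gives the desired conclusion for $S(\mathcal{M})$. The bulk of the work is thus purely structural, and apart from invoking the identification $S(M_{n}(\mathcal{Z}))\cong M_{n}(S(\mathcal{Z}))$ the proof is a one-line application of Lemma \ref{abelianzlpd}.
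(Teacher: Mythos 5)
Your proposal is correct and follows essentially the same route as the paper: decompose $\mathcal{M}\cong M_{n}(\mathcal{B})$ with $\mathcal{B}$ abelian (you take $\mathcal{B}$ to be the centre, which is the standard realisation), identify $S(\mathcal{M})\cong M_{n}(S(\mathcal{B}))$, note that $S(\mathcal{B})$ is abelian and unital, and invoke Lemma \ref{abelianzlpd}. Your write-up is somewhat more explicit about why the matrix identification of $S(\cdot)$ holds, but the argument is the same.
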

\begin{proof}
  If $\mathcal{M}$ is Type $I_{n}$, then $\mathcal{M}\cong M_{n}(\mathcal{B})$ where  $\mathcal{B}$ is an abelian von Neumann algerba. In this case, $S(\mathcal{M})\cong M_{n}(S(\mathcal{B}))$, and $S(\mathcal{B})$ is an abelian algebra. By Proposition \ref{abelianzlpd}, $S(\mathcal{M})$ is  zero Lie product determined.\end{proof}
\subsection{Two-sided zero product determined  algebras}

We now introduce a definition of analytic version for an algebra to be two-sided zero product determined.

  A Banach algebra $\mathcal{A}$ is \textit{two-sided zero product determined} if every continuous bilinear functional $\varphi : \mathcal{A}\times \mathcal{A}\rightarrow \mathbb{C}$ with the property that for all $x, y\in \mathcal{A}$,
$$xy=yx=0\Rightarrow \varphi(x,y)=0,$$
  is of the form
$$\varphi(x,y)=\tau_{1}(xy)+\tau_{2}(yx)$$ for some continuous linear functionals $\tau_{1}, \tau_{2}$ on $\mathcal{A}$.

A proposition about two-zero product determined algebras appears immediately, which is a substantial generalization of \cite[Corollary 3.2]{twosided}.

\begin{proposition}
  Any finite nest algebra on a complex Hilbert space is two-zero product determined.
\end{proposition}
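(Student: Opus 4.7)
\emph{Proof proposal.} The plan is to exploit the block decomposition of a finite nest algebra and reduce the two-sided zero product determined property to that of the von Neumann algebras appearing on the diagonal blocks. Write the finite nest as $\mathcal{N}=\{0=P_{0}<P_{1}<\cdots<P_{n}=I\}$, set $Q_{i}=P_{i}-P_{i-1}$, and put $\mathcal{A}_{ij}=Q_{i}B(\mathcal{H})Q_{j}$ for $1\leq i\leq j\leq n$. Then $\mathrm{Alg}\,\mathcal{N}=\bigoplus_{i\leq j}\mathcal{A}_{ij}$ as a Banach space, and each diagonal piece $\mathcal{A}_{ii}\cong B(Q_{i}\mathcal{H})$ is a von Neumann algebra, hence a $C^{*}$-algebra, which is known to be two-sided zero product determined. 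Applying this fact to the restriction $\varphi\rvert_{\mathcal{A}_{ii}\times\mathcal{A}_{ii}}$ furnishes continuous linear functionals $\tau_{1}^{(i)},\tau_{2}^{(i)}$ on $\mathcal{A}_{ii}$ with $\varphi(x,y)=\tau_{1}^{(i)}(xy)+\tau_{2}^{(i)}(yx)$ for all $x,y\in\mathcal{A}_{ii}$.

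Next I would extract the vanishings the hypothesis imposes between different blocks. For $x\in\mathcal{A}_{ij}$ with $i<j$ and $y\in\mathcal{A}_{kl}$ with $k<l$ one checks directly that $xy=0$ unless $j=k$ and $yx=0$ unless $l=i$, while the simultaneous equalities $j=k$ and $l=i$ contradict $i<j$ and $k<l$; hence $\varphi(x,y)=0$ whenever $j\neq k$ and $l\neq i$. To pin down the cross-block values involving diagonal idempotents, I would test $\varphi$ on the pair $(Q_{i}+\lambda x_{ij},\,Q_{j}-\lambda x_{ij})$ with $i<j$, whose products in both orders vanish identically in $\lambda$. The hypothesis gives the polynomial identity
\[
\varphi(Q_{i},Q_{j})-\lambda\,\varphi(Q_{i},x_{ij})+\lambda\,\varphi(x_{ij},Q_{j})-\lambda^{2}\varphi(x_{ij},x_{ij})=0,
\]
whose constant and quadratic coefficients vanish for free and whose linear coefficient yields $\varphi(Q_{i},x_{ij})=\varphi(x_{ij},Q_{j})$ for every $x_{ij}\in\mathcal{A}_{ij}$. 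Analogous perturbations with larger interacting blocks produce all the remaining cross-block consistency relations.

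With these building blocks I would define continuous linear functionals $\tau_{1},\tau_{2}$ on $\mathrm{Alg}\,\mathcal{N}$ whose restrictions to each $\mathcal{A}_{ii}$ are $\tau_{1}^{(i)},\tau_{2}^{(i)}$ and whose values on off-diagonal blocks are dictated by the perturbation identities above. The identity $\varphi(x,y)=\tau_{1}(xy)+\tau_{2}(yx)$ is then verified by expanding $x=\sum_{i\leq j}x_{ij}$ and $y=\sum_{k\leq l}y_{kl}$ and checking it block by block: the diagonal--diagonal case is the first step, purely off-diagonal cases fall under the vanishing observation above, and mixed diagonal/off-diagonal cases are absorbed by the perturbation identities together with bilinearity.

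The main obstacle I foresee is precisely this assembly: showing that the candidate $\tau_{1},\tau_{2}$ are globally well-defined and that no inconsistency arises between the diagonal data $\{\tau_{1}^{(i)},\tau_{2}^{(i)}\}$ and the cross-block constraints produced by the perturbation trick. This is the technical heart of \cite[Corollary 3.2]{twosided} for scalar upper triangular matrices, and a short induction on the nest length $n$, combined with repeated use of the pair $(Q_{i}+\lambda x_{ij},\,Q_{j}-\lambda x_{ij})$ as the testing device, should transport that argument from the scalar upper triangular case to the finite block nest setting considered here.
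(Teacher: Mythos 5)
Your block decomposition and the perturbation identities you write down are all correct, but the proof is not complete: the step you defer (``the assembly'') is the entire content of the proposition and does not follow from what you have established. Concretely, your identities force $\tau_{1}(z)=\varphi(Q_{i},z)$ and $\tau_{2}(z)=\varphi(z,Q_{i})$ for $z\in\mathcal{A}_{ij}$, but you then need (a) $\varphi(a,z)=\varphi(Q_{i},az)$ for \emph{every} $a\in\mathcal{A}_{ii}$, not just $a=Q_{i}$, and (b) $\varphi(x_{ij},y_{jk})=\varphi(Q_{i},x_{ij}y_{jk})$ for $i<j<k$, where $x_{ij}y_{jk}\neq 0$ but $y_{jk}x_{ij}=0$. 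Neither is a consequence of testing on $(Q_{i}+\lambda x_{ij},\,Q_{j}-\lambda x_{ij})$. For (a), the hypothesis only gives $az=0\Rightarrow\varphi(a,z)=0$ on $\mathcal{A}_{ii}\times\mathcal{A}_{ij}$; passing from that to factoring through the product $az$ is itself a zero-product-determined statement in module form for the corner $C^{*}$-algebra $B(Q_{i}\mathcal{H})$ acting on $\mathcal{A}_{ij}$, which needs its own justification. For (b), neither factor is a subalgebra with nonzero multiplication, so no corner result applies directly, and none of your listed relations touches this case. Until these are supplied, $\tau_{1}$ and $\tau_{2}$ are not shown to satisfy $\varphi(x,y)=\tau_{1}(xy)+\tau_{2}(yx)$ on the mixed and purely off-diagonal blocks, which is where all the work lies; note that (a) already arises at $n=2$ and (b) at $n=3$, so an induction on the nest length does not dispose of them.

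The paper's own proof sidesteps all of this in two lines: a finite nest algebra is weakly amenable by \cite[Lemma 3.2]{nestweakly} and is zero product determined (a finite nest is a completely distributive commutative subspace lattice, so Theorem \ref{cdcslB} applies), and a weakly amenable, zero product determined Banach algebra is two-sided zero product determined by \cite[Theorem 6.6]{bresarbook}. If you wish to keep the direct block-by-block route, the missing ingredients (a) and (b) are precisely what that general theorem (via weak amenability and the module zero-product machinery) is designed to provide, so you would in effect be re-proving it corner by corner.
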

\begin{proof}
  First, any finite nest algebra is weakly amenable by \cite[Lemma 3.2]{nestweakly}, and it is zero product determined. Hence we can obtain it is two-zero product determined by \cite[Theorem 6.6]{bresarbook}.
  \end{proof}
%
%
%
%
%
%
%
%
%
%
%
%
%
%
%
%
%
%



\end{document}